\documentclass[12pt,twoside,a4paper,reqno]{amsart}
\usepackage{amssymb}
\usepackage{amsmath,amscd}
\usepackage{bbm}

\usepackage[UKenglish]{babel}
\usepackage{inputenc} 

\usepackage{setspace}
\usepackage{hyperref}
\usepackage{aliascnt}
\usepackage{enumitem}
\usepackage{xcolor}

\usepackage{geometry}
\usepackage{mathrsfs}
\usepackage{calligra}
\usepackage[arrow, matrix, curve]{xy}
\usepackage{graphicx}

\makeatletter
	\def\section{\@startsection{section}{1}
		\z@{3\linespacing\@plus.5\linespacing}{\linespacing\@plus .5\linespacing}
		{\normalfont\bf\centering}}

	\newcommand{\myhypertarget}[1]{\Hy@raisedlink{\hypertarget{#1}{}}}

	\newcommand{\CustomLabel}[2]{%
		\protected@write \@auxout {}{\string \newlabel {#1}{{#2}{\thepage}{#2}{#1}{}} }%
		\myhypertarget{#1}{#2}%
		}

	\def\getTitle{\@title}

\makeatother

\mathcode`l="8000
\begingroup
\makeatletter
\lccode`\~=`\l
\DeclareMathSymbol{\lsb@l}{\mathalpha}{letters}{`l}
\lowercase{\gdef~{\ifnum\the\mathgroup=\m@ne \ell \else \lsb@l \fi}}%
\makeatother
\endgroup

\allowdisplaybreaks

\addto\extrasUKenglish{}
\addto\extrasUKenglish{} 

\numberwithin{equation}{section}
\numberwithin{figure}{section}

\newaliascnt{thmX}{equation}

\newtheorem{thm}[thmX]{Theorem}

\newaliascnt{lmaX}{equation}

\newtheorem{lem}[lmaX]{Lemma}

\newaliascnt{corX}{equation}

\newtheorem{cor}[corX]{Corollary}

\newaliascnt{propX}{equation}

\newtheorem{prop}[propX]{Proposition}

\theoremstyle{definition}

\newaliascnt{defX}{equation}

\newtheorem{df}[defX]{Definition}

\newaliascnt{remX}{equation}

\newtheorem{rem}[remX]{Remark}

\newaliascnt{questX}{equation}

\newcommand{\Thm}{Thm~}
\newcommand{\Lem}{Lem.\,}
\newcommand{\Prop}{Prop.\,}
\newcommand{\Sec}{Sect.\,}

\newcommand{\Chap}{Chap.\,}

\def\ItemName{CoverItems}
\newcounter{\ItemName}	
\newcommand{\AddItem}[1][\fpeval{\arabic{\ItemName}}]{\stepcounter{\ItemName}\CustomLabel{\ItemName:#1}{(\alph{\ItemName})}}
\newcommand{\GetItem}[1]{\ref{\ItemName:#1}}

\hypersetup{
	pdftoolbar=true,
	pdfmenubar=true,
	pdftitle={\getTitle}, %
	pdfauthor={Martin Sera},
	pdfsubject={Preprint},
	pdfkeywords={},
	pdfborder= 0 0 .01,
	bookmarksnumbered=false,
	breaklinks=true,   
	colorlinks=true,
	linkcolor={[rgb]{0,.2,.4}},
	citecolor={[rgb]{0,.4,0}},
	menucolor={[rgb]{.4,0,0}},
	urlcolor={[rgb]{.4,0,0}},
}

           \DeclareMathOperator{\ReX}{Re}         \DeclareMathOperator{\supp}{supp}  \DeclareMathOperator{\tr}{tr}

\DeclareMathOperator{\sKer}{\mathscr{K}\text{\kern -4pt {\calligra\large er}}\,} \DeclareMathOperator{\sIm}{\mathscr{I}\text{\kern -5pt {\calligra\large m}}\,} \DeclareMathOperator{\sHom}{\mathscr{H}\text{\kern -4pt {\calligra\large om}}\,} \DeclareMathOperator{\sExt}{\mathscr{E}\text{\kern -3pt {\calligra xt}}\,} \DeclareMathOperator{\sCoker}{\mathscr{C}\text{\kern -3pt {\calligra oker}}\,}

\makeatletter
\def\makephX#1#2{\setbox\z@\hbox{#1}\setbox\tw@\null
  \wd\tw@#2\wd\z@ \box\tw@} 
\makeatother
\newcommand{\DiagrammPunkt}{\;.\makephX{\;.}{-1}}

\newcommand{\KE}[1][0]{\mathscr{K}_{E\ifx#10\else{,#1}\fi}}

\def\tilde{\widetilde}

 \newcommand{\aU}{\mathfrak{U}} \newcommand{\aV}{\mathfrak{V}} 

\newcommand{\sC}{\mathscr{C}}         \newcommand{\sK}{\mathscr{K}}     \newcommand{\sS}{\mathscr{S}}

  \newcommand{\CC}{\mathbb{C}}      \newcommand{\RR}{\mathbb{R}} 

\newcommand{\ii}{\mathrm{i}} 
     \newcommand{\dom}[1][]{\mathrm{dom}_{#1}\,}   
 \newcommand{\loc}{\mathrm{loc}}   \newcommand{\reg}{\mathrm{reg}}   \newcommand{\sing}{\mathrm{sing}}  \newcommand{\SPSH}{\mathrm{SPSH}} 

\newcommand{\cf}{cf.\ }
\newcommand{\eg}{e.\,g.\ }  
\newcommand{\ie}{i.\,e.\ } 
\newcommand{\st}{s.\,t.\ }
\newcommand{\ip}{i.\,p.\ }

\newcommand{\nbd}{\nobreakdash-}
\title[About $L^2$- \& $L^{2,\loc}$-cohomologies of spaces with pseudoconvex boundary]{
About top-degree \texorpdfstring{$L^2$- and $L^{2,\loc}$-}{L{\000\136}2- and L\000\136\{2,loc\}-}Dolbeault cohomologies of complex spaces with pseudoconvex boundary
}
\author[M. Sera]{Martin L. Sera}
\address{M. Sera, Faculty of Engineering, Kyoto University of Advanced Science, Kyoto 615-8577, Japan}
\email{sera.martin@kuas.ac.jp}
\dedicatory{In memory of Nikolay Shcherbina}
\subjclass[2020]{32C15, 32W05, 32F10, 32L20 (32E10, 32T99)}
\keywords{complex spaces, Dolbeault cohomology, $L^2$-theory, pseudoconvex boundary, local vanishing theorem, vector bundles}
\date{\today}
\newcommand{\FormelQed}[1][]{~\hfill\raisebox{#1\baselineskip}[0mm][0mm]{\qedhere}}
\newcommand{\K}[1][...]{{\normalfont[[\textcolor{darkred}{\small{#1}}]]}\ClassWarningNoLine{0}{Achtung: \K[#1] gefunden}}
\definecolor{darkred}{rgb}{.77, .01,.2}
\begin{document}
\begin{abstract}
Let $X$ be a complex space of pure-dimension $n$.
For a pseudoconvex relatively compact domain in $X$ with $\sC^3$-smooth boundary and
embedded in a domain of the complex number space,
we prove that the
$L^2$- and $L^{2,\loc}$-Dolbeault $(n,q)$-cohomology groups are vanishing for $q>0$.
Thereby, we include the case that the forms have values in a Nakano semi-positive holomorphic vector bundle.
Using this local vanishing theorem,
we also prove the equivalence
of the $L^2$- and $L^{2,\loc}$-Dolbeault $(n,q)$-cohomology groups of relatively compact domains $\Omega=\{\rho<0\}$ in $X$
which are defined by a $\sC^3$\nbd smooth function $\rho$
which is strictly plurisubharmonic on a neighbourhood of $\partial\Omega$ except of finitely many points.
\end{abstract}
\maketitle
\section{Introduction}
In the area of complex geometry, the Dolbeault cohomology has turned out to be a crucial tool
for studying complex manifolds and vector bundles.
Whereas the theory is well-developed in the smooth setting, many problems are open
when studying Dolbeault cohomology on complex spaces.
In this context, it has been especially fruitful to consider the $L^2$-theory of the Dolbeault operator.
\par
Therefore, the purpose of this work is to study
Dolbeault cohomology groups
with respect to (locally) square-integrable forms (with values in vector bundles)
on domains with pseudoconvex boundary in complex spaces.
Thereby, the key point is that these  domains
are not necessarily strictly pseudoconvex in (all) boundary points.
Before presenting the results, let us first explain the used notations and setting.
\medskip \par
A Hermitian (complex) space $(X,\omega)$ is a reduced complex space $X$
with a Hermitian metric $\omega$ on the regular part $X_\reg$
such that the following holds:
If $x\in X_\sing$ is an arbitrary point, there exist a neighbourhood $U=U(x)\subset X$,
a holomorphic embedding of $U$ in a domain $B$ in $\CC^N$ and
an ordinary smooth Hermitian metric on $B$ whose restriction to $U_\reg$ is equal to $\omega|_{U_\reg}$ (or equivalent to).
Please note we are going to use $\omega$ to denote both, the Hermitian metric and the associated positive $(1,1)$-form on $X_\reg$.
\smallskip \par
The space of square-integrable forms on $X$ is denoted by %
$L _{p,q}^2(X):= L_{p,q}^2 (X_\reg,\omega)$ and
the space of locally square-integrable forms \emph{on $X$} by
$$L_{p,q}^{2,\loc}(X):=\left\{u\in L_{p,q}^{2,\loc} (X_\reg,\omega)\colon u|_U\in L_{p,q}^2 (U)\ \forall\,U{\Subset} X\right\}\subsetneq L_{p,q}^{2,\loc} (X_\reg,\omega).$$
Let $\overline{\partial}_w\colon L^{2}_{p,q}(X) \rightarrow L^{2}_{p,q+1}(X)$  and $\overline{\partial}_{w,\loc}\colon L^{2,\loc}_{p,q}(X) \rightarrow L^{2,\loc }_{p,q+1}(X)$ denote the maximal extensions of the Dolbeault operator, \ie $\overline{\partial}_w$ and $\overline{\partial}_{w,\loc}$ are defined in the sense of distributions.
Although $\overline{\partial}_w$ and $\overline{\partial}_{w,\loc}$ are obviously not defined for all (locally) square-integrable forms,
usually, we do not distinguish in the notation (explicitly)
the domains of definition, $\dom\overline{\partial}_w$ and $\dom\overline{\partial}_{w,\loc}$,
from $L^{2}_{p,q}(X)$  and $L^{2,\loc}_{p,q}(X)$, respectively.
Also, if the context is clear, we denote $\overline{\partial}_w$ and $\overline{\partial}_{w,\loc}$ just by $\overline{\partial}$.
We use the following notation for their Dolbeault cohomology groups:
For $q>0$, we define
\begin{equation*}\begin{split}
H_w^{p,q}(X) &:=\ker \big(\overline{\partial}_w\colon L^{2}_{p,q}(X) \!\rightarrow\! L^{2}_{p,q+1}(X)\big)
\big/\, \overline{\partial}_w (L^{2}_{p,q-1}(X))\quad\hbox{and}\\
H_{w,\loc}^{p,q}(X) &:=
\ker \big(\overline{\partial}_{w,\loc}\colon L^{2,\loc}_{p,q}(X) \!\rightarrow\! L^{2,\loc}_{p,q+1}(X)\big)
\big/\, \overline{\partial}_{w,\loc} (L^{2,\loc}_{p,q-1}(X)).
\end{split}\end{equation*}
\par
Let $X'\subset X_\reg$ be the complement of an analytic set in $X$ containing all singular points $X_\sing$ and
let $E\rightarrow X'$ be a holomorphic vector bundle (just) defined on $X'$ with the Hermitian metric $h$.
An important example for $E$ to be not defined on the whole $X$ (as vector bundle) is
if $E$ is the vector bundle of a coherent analytic sheaf $\sS$ and %
$X'$ are all points in $X_\reg$ where $\sS$ is locally free.
Actually, this is the case if and only if
$E$ is the direct image of a holomorphic vector bundle $\tilde E$ on a proper modification $\pi\colon Y\rightarrow X$
(whereby $\pi_\ast \tilde E$ is not necessarily a vector bundle on all of $X$). %
For forms with values in $E$, %
we can define
$L _{p,q}^2(X, E):=L _{p,q}^2(X',\omega; E,h)$, %
$$L_{p,q}^{2,\loc}(X,E):=\left\{u\in L_{p,q}^{2,\loc} (X',\omega;E,h)\colon u|_U\in L_{p,q}^2 (U, E)\ \forall\,U{\Subset} X\right\},$$ %
$H_w^{p,q}(X,E)$ and $H_{w,\loc}^{p,q}(X,E)$
analogously as above.
Of course, these definitions depend on the metric $h$ on $E$ in general.
Yet, we suppress $h$ in the notation whenever it is clear by context.
\smallskip \par
We say that a domain $\Omega$ in $X$ has a $\sC^k$-smooth boundary
if there exists a $\sC^k$-smooth%
\footnote{
A function $\rho$ on a complex space $X$ is called $\sC^k$-smooth if locally,
there exist an embedding of $X$ in some complex number space and a $\sC^k$-smooth extension of $\rho$ to the ambient space.
}  $\Omega$-defining function $\rho$ on $X$, %
\ie $\Omega=\{\rho<0\}$,
with $d\rho\neq 0$ in a neighbourhood of $\partial \Omega$. %
In particular, $|d\rho|^{-1}$ is uniformly bounded on $\partial \Omega\cap X_\reg$.
If $k\geq 2$ and if $x\in\partial\Omega$ is a pseudoconvex boundary point of $\Omega$
(\ie $U\cap\Omega$ is pseudoconvex%
\footnote{
A domain $D\subset X$ is called pseudoconvex
if there exists a continuous/smooth strictly plurisubharmonic smooth exhaustion function%
{\textsuperscript{\ref{fn:psh}}}, \ie $D$ is a Stein space. %
} for any sufficient small pseudoconvex neighbourhood $U\subset X$),
then we get that the $\ii\partial\overline{\partial}\rho$ is semi-positive definite
on the holomorphic tangent space of $\partial\Omega$ in $x\in X_\reg$.
If $x\in\partial\Omega$ is a strictly pseudoconvex boundary point of $\Omega$, then we can choose $\rho$ to be strictly plurisubharmonic%
\footnote{\label{fn:psh}%
An (upper semi-continuous) function $\varphi$ on a complex space $X$ is plurisubharmonic
if locally, there exist some embedding of $X$ in the complex number space and
a plurisubharmonic extension of $\varphi$ to the ambient space.
Please keep in mind that a smooth plurisubharmonic function does not need to have
a (local) extension which is both smooth and plurisubharmonic.
If any small smooth perturbations of $\varphi$ is also plurisubharmonic, then $\varphi$ is strictly plurisubharmonic.
} on a neighbourhood of $x$.
\medskip \par
The first main theorem of the present work is the following local vanishing result.
\begin{thm}\label{main-local-thm}
Let $X$ be an analytic set of pure-dimension $n$ in the unit ball $B\subset\CC^N$,
let $X'\subset X_\reg$ be the complement of an analytic set in $X$ containing $X_\sing$,
let $E$ be a Nakano semi-positive holomorphic vector bundle on $X'$ and
let $D\Subset X$ be a relatively compact pseudoconvex domain in $X$ with $\sC^3$-smooth boundary.
Then, for all $q>0$,
\[H^{n,q}_{w}(D,E) =0=H^{n,q}_{w,\loc}(D,E).\] %
\end{thm}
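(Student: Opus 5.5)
Our plan is to obtain the $L^2$-vanishing by Hörmander's $L^2$-method with a weight adapted to the boundary on the incomplete Kähler manifold $D\cap X'$. We then deduce the $L^{2,\loc}$-statement from it by an exhaustion argument. We fix a complete Kähler metric on $D\cap X'$ that dominates the restriction $\omega$ of the Euclidean metric of $\CC^N$. This is possible because $X\setminus X'$ is analytic and $D$ is Stein: take $\omega+\ii\partial\overline{\partial}\psi$, where $\psi$ is the sum of a $\log$-type function blowing up along $X\setminus X'$ and $-\log(-\delta)$ for a suitable plurisubharmonic exhaustion $\delta$ of $D$. For $(n,q)$-forms, the pointwise $L^2$-norm is non-increasing when the metric increases. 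The Bochner--Kodaira--Nakano inequality for $K_X\otimes E$ has curvature term $[\ii\Theta(E)+\ii\partial\overline{\partial}\varphi\otimes\mathrm{id},\Lambda]$. It is non-negative for Nakano semi-positive $E$ and plurisubharmonic $\varphi$, and $\ge q$ times the eigenvalue bound when $\ii\partial\overline{\partial}\varphi\ge c\,\omega$. Demailly's theorem on complete Kähler manifolds with the weight $\varphi=|z|^2$ (bounded on $B$, with $\ii\partial\overline{\partial}\varphi=\omega$) then solves $\overline{\partial}u=f$ for every $\overline{\partial}$-closed $f\in L^2_{n,q}(D\cap X',E)$ with respect to $\omega$. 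The estimate is $\|u\|^2\le q^{-1}e^{\sup\varphi}\|f\|^2$, uniformly in the auxiliary complete metrics. Letting the complete metrics decrease to $\omega$ and taking weak limits gives $u\in L^2_{n,q-1}(D,E)$ with $\overline{\partial}_w u=f$ in the distribution sense on $D\cap X'$.

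A subtle point is that $\overline{\partial}_w$ on $D$ is defined on $X'$ (resp. $X_\reg$) in the sense of distributions, and we must make sure nothing is lost across $X\setminus X'$. For $L^2$ forms, removable-singularity results for $\overline{\partial}$ across analytic sets of codimension $\ge1$ in the $L^2$ setting are needed only if the definition requires it. Since $L^2_{p,q}(X,E)$ is defined on $X'$, the equation on $X'$ is exactly what is required. This settles $H^{n,q}_w(D,E)=0$, and the argument only needs $D$ Stein and bounded in $\CC^N$.

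For $H^{n,q}_{w,\loc}(D,E)$, we take a $\overline{\partial}$-closed $f\in L^{2,\loc}_{n,q}(D,E)$ and choose the exhaustion $D_j=\{\rho<-1/j\}$. The $\sC^3$-smoothness and pseudoconvexity of $\partial D$ are used here. Following the Diederich--Fornæss style, we produce a bounded plurisubharmonic exhaustion, or more simply a family of pseudoconvex (Stein) sublevel domains $D_j\Subset D$. Concretely, we use $-\log(-\rho)+K|z|^2$, which is plurisubharmonic near $\partial D$ for $\rho$ of class $\sC^2$ with semi-positive Levi form on $\partial D$ (Oka's lemma). The $\sC^3$ hypothesis should enter in controlling the Levi form of the level sets $\{\rho=-1/j\}$ up to $O(1/j)$ errors, as in the Diederich--Fornæss construction of a Hölder exponent. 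Each $D_j$ is Stein, so the $L^2$ step gives solutions $u_j$ on $D_j$. We then run a Mittag-Leffler argument: for $q\ge2$, $u_{j+1}-u_j$ is $\overline{\partial}$-closed on $D_j$ and is corrected by the $L^2$-result. For $q=1$, it is a holomorphic $n$-form with values in $E$, and we need an $L^2$ Runge approximation on $D_{j-1}$ by $L^2$ holomorphic sections on $D_{j+1}$.

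The main obstacle is this last step: the Runge property of $D_j\subset D_{j+1}$ in the $L^2$-sense with singular $X$ and $E$ only on $X'$. We plan to get it from Hörmander's density argument, i.e. the standard duality proof of the approximation theorem in Hörmander's book. It uses that the $D_j$ are sublevel sets of a single plurisubharmonic exhaustion, with weights $\chi(\psi)$ for convex increasing $\chi$. This works because the $L^2$-solution above holds for arbitrary plurisubharmonic weights on $D\cap X'$.
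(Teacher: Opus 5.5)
Your $L^2$ half is sound and takes a different route from the paper. The paper builds complete metrics $\omega_\varepsilon=\ii\partial\overline{\partial}\varphi_\varepsilon$ whose potentials have self-bounded gradient and then applies the Donnelly--Fefferman--Ohsawa theorem, which needs no positivity from a weight. To get completeness at $\partial D$ it uses Ohsawa's boundary-adapted function from Proposition~\ref{lem:Ohsawa}, and this is where the $\sC^3$ hypothesis enters. You instead use Demailly's form of H\"ormander's estimate on a manifold that merely carries some complete K\"ahler metric. The incomplete metric $\omega$ and the bounded weight $|z|^2$ supply the positivity $A\ge q$, and for $(n,q)$-forms the monotonicity of $|f|^2dV$ and $\langle A^{-1}f,f\rangle dV$ in the metric lets you pass to the limit. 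This only needs $D$ Stein. Note that your $-\log(-\delta)$ would require a \emph{bounded} psh exhaustion, which you do not have in general, but you do not need that particular metric: $D$ minus a hypersurface containing $X\setminus X'$ is a Stein manifold, hence carries a complete K\"ahler metric, and the $L^2$ Riemann extension handles the rest. So your route is more elementary and shows the $L^2$ vanishing needs no boundary regularity. The paper's route instead gives a weight-free constant $6n$.

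The $L^{2,\loc}$ half, however, has a gap. First, the domains $\{\rho<-1/j\}$ need not be pseudoconvex. Plurisubharmonicity of $-\log(-\rho)+K|z|^2$ near $\partial D$ is a statement about the sublevel sets of that function, not of $\rho$. For example, locally $\rho=\ReX z_2\,(1+|z_1|^2)$ defines the pseudoconvex half-space $\{\ReX z_2<0\}$, yet $\{\rho<-t\}=\{\ReX z_2<-t/(1+|z_1|^2)\}$ has Levi form $-t$ at $z_1=0$. This is repairable by taking sublevel sets of a strictly psh exhaustion of the Stein space $D$. More seriously, the case $q=1$ of your Mittag-Leffler scheme rests on an $L^2$ Runge approximation theorem for weakly holomorphic $E$-valued $n$-forms on a singular space, which you only announce, so the argument is not complete.

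The missing observation is that no exhaustion is needed. Your $L^2$ step works for any psh weight, so take a smooth psh exhaustion $\Phi$ of $D$ and a convex increasing $\chi$ with $\int_D|f|^2e^{-\chi\circ\Phi}<\infty$, and solve with weight $|z|^2+\chi\circ\Phi$, whose Levi form is still $\ge\omega$. You get $\int_D|u|^2e^{-|z|^2-\chi\circ\Phi}\le\frac1q\int_D|f|^2e^{-|z|^2-\chi\circ\Phi}$. Since $e^{-\chi\circ\Phi}$ is bounded below on compact subsets of $D$, this gives $u\in L^{2,\loc}_{n,q-1}(D,E)$. This is exactly the paper's device of replacing $h$ by the still Nakano semi-positive metric $\|\cdot\|_{h'}=\|e^{-\Phi}\cdot\|_h$, and it yields both vanishing statements at once.
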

Here, the cohomologies are taken with respect to the standard Hermitian metric on $\CC^N$ restricted to $D$.
However, the cohomology groups vanish also for any other chosen metric on $X$
as these are equivalent to the standard metric on $D$.
\par
By generalizing the arguments of W. L. Pardon and M. A. Stern for \cite[\Prop 2.1]{PardonStern91}
to our more general setting,
we prove \autoref{main-local-thm} in \autoref{sec:local}.
\smallskip \par
If $X$ has only isolated singularities which are not contained in $\partial D$,
then $H^{p,q}_{w}(D) =0$ for $p+q>n$ was proven by J.\,E. Forn\ae ss, N. \O vredlid and S. Vassiliadou,
see \cite[\Thm 1.4]{Fornaess-Ovrelid-Vassiliadou05} (together with many other interesting results in this context).
This is a generalization of a result by T. Ohsawa, \cite[\Prop 4.1]{Ohsawa87Hodge}.
It would be interesting to extend their result to the case when singular points of $X$ are contained in $\partial D$.
This might be possible by combining classical techniques of L. H\"ormander
with the ones from T. Ohsawa or Forn\ae ss--\O vredlid--Vassiliadou.
\medskip \par
Using \autoref{main-local-thm},
we can prove the equivalence of $L^2$- and $L^{2,\loc}$-Dolbeault cohomologies, see \autoref{main-grauert} below.
Since we do not need that $E$ is Nakano semi-positive globally for this,
we use the following much weaker notion instead.
\begin{df}
\label{df:loc-Nakano}
Let $X$ be a complex space,
let $X'$ be the complement of an analytic set in $X$ and %
let $(E,h)$ be a Hermitian holomorphic vector bundle on $X'$.
Then, $h$ (and $E$) are called \emph{locally Nakano semi-positive with respect to $X$}
if for every small enough open $U\subset X$, there exists a Nakano semi-positive metric $h'$ on $E|_{U\cap X'}$
which is equivalent to $h|_{U\cap X'}$,
\ie there is a constant $C$
with $\frac1C\|\cdot\|_{h'}\leq\|\cdot\|_h\leq C\|\cdot\|_{h'}$ on $U\cap X'$.
\end{df}
Since $h$ is not defined in $X{\setminus} X'$,
the equivalence of the metrics $h$ and $h'$ is needed to get $L^{2}_{p,q}(U;E,h)=L^{2}_{p,q}(U;E,h')$ and
$L^{2,\loc}_{p,q}(U;E,h)=L^{2,\loc}_{p,q}(U;E,h')$.
So, we can replace $h$ by the Nakano semi-positive $h'$
without changing the space of (locally) square-integrable forms with values in $E$ and corresponding cohomologies.
\par
If $(E,h)$ is defined on the whole $X$, it is trivially satisfied
that $h$ is locally Nakano semi-positive with respect to $X$.
In the more general case that $E=\pi_\ast\tilde E$ on $X'$
for a proper modification $\pi\colon Y\rightarrow X$ and a holomorphic vector bundle $(\tilde E,\tilde h)$ on $Y$,
where $X'$ is the complement of $\pi$'s centre,
we get $h=(\pi^{-1})^\ast({\tilde h})$ on $X'$ is locally Nakano semi-positive with respect to $X$
if for every small enough $U\subset X$, there is a Nakano semi-positive metric $h'$ on $\tilde E|_{\pi^{-1}(U)}$.
Please note that there is no condition on $\tilde h$ itself necessary.
It is just used to ensure that $\pi^\ast h$ has a (smooth) extension to the whole $Y$.
\smallskip \par
Next, let us consider the
sheaf $\KE$ of locally square-integrable \emph{weakly} holomorphic $n$\nbd forms with values in $E$,
\ie $\KE(U)$ is the kernel of $\overline{\partial}_{w,\loc}\colon L^{2,\loc}_{n,0}(U,E)\rightarrow L^{2,\loc}_{n,1}(U,E)$ for any open $U\subset X$.
For the mentioned cohomology equivalence result, we are going to assume that $\KE$ is coherent.
However, this is always the case
if there are a proper modification $\pi\colon Y\rightarrow X$ of $X$ and
a Hermitian holomorphic vector bundle $(\tilde E,\tilde h)$ on $Y$
such that $Y$ is smooth,
$\pi^\ast E=\tilde E|_{\pi^{-1}(X')}$ %
and $\pi^\ast h$ is equivalent to $\tilde h|_{\pi^{-1}(X')}$ (locally with respect to $Y$).
Then, we get $\KE=\pi_\ast(\Omega^n_Y(\tilde E))$
whereby $\Omega^n_Y(\tilde E)$ denotes the holomorphic $n$\nbd forms on $Y$ with values in $\tilde E$,
\cf \cite[\Thm 2.1]{Ruppenthal14Duke} and \cite[\Thm 10.1']{Sera15};
\ip $\KE$ is coherent by Grauert's direct image theorem.
\smallskip \par
\begin{thm}\label{main-grauert}
Let $X$ be a Hermitian space of pure-dimension $n$ and
let $\Omega\Subset X$ be a relatively compact domain in $X$ with $\sC^3$-smooth boundary and
a defining function which is strictly plurisubharmonic on a neighbourhood of $\partial\Omega$ in $X$ besides finitely many points.
Let $X'\subset X_\reg$ be the complement of an analytic set in $X$ and %
let $E$ be a holomorphic vector bundle on $X'$
which is locally Nakano semi-positive with respect to $X$ and
such that $\sK_E$ is a coherent analytic sheaf on $X$.
\par
Then, for all $q>0$,
the natural embedding
$L^2_{n,q}(\Omega,E)\hookrightarrow L^{2,\loc}_{n,q}(\Omega,E)$
induces an isomorphism of the cohomology groups, \ie
\[H^{n,q}_{w}(\Omega,E) \cong H^{n,q}_{w,\loc}(\Omega,E).\]%
\end{thm}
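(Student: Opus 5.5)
We compare both cohomology groups with sheaf cohomology of $\KE$. Since the strict pseudoconvexity may fail at finitely many boundary points, we cannot argue purely with Grauert's bumping method. Instead, near every boundary point (including the exceptional ones) we invoke the local vanishing \autoref{main-local-thm}.

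\textbf{Step 1 (local resolutions).} On $X$, consider the sheaves $\mathscr{L}^q$ of germs of forms $u\in L^{2,\loc}_{n,q}(\cdot,E)$ with $\overline{\partial}_{w,\loc}u\in L^{2,\loc}$. These sheaves are fine, since they are modules over smooth cut-off functions. By \autoref{main-local-thm}, applied on small balls around interior points, where we take a Nakano semi-positive equivalent metric $h'$ as in \autoref{df:loc-Nakano}, the complex $0\to\KE\to\mathscr{L}^0\to\mathscr{L}^1\to\cdots$ is exact. Hence $H^{n,q}_{w,\loc}(\Omega,E)\cong H^q(\Omega,\KE)$.

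For the $L^2$-side we do the same on $\overline\Omega$. For $x\in\partial\Omega$, choose a local embedding into $\CC^N$ and a small ball $B_x$ such that $D_x:=\Omega\cap B_x$ is pseudoconvex with $\sC^3$ boundary; here we round off the corner by intersecting with a slightly modified ball. At strictly psh points of $\rho$ this is clear. At the finitely many exceptional points we still need pseudoconvexity of $\Omega$ near them; I would obtain this from $\rho$ being psh there by continuity, because it is the limit of strictly psh values. Then \autoref{main-local-thm} applies to $D_x$, so the sheaves $\mathscr{L}^q_{\overline\Omega}$ of $L^2$-up-to-the-boundary forms give a fine resolution of $\KE|_{\overline\Omega}$ in the sense of sheaves on $\overline\Omega$. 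This yields $H^{n,q}_w(\Omega,E)\cong H^q(\overline\Omega,\KE)$, the cohomology on $\overline\Omega$, i.e.\ the direct limit over neighbourhoods. The intersections $D_x\cap D_y$ used for a Leray cover must again satisfy the hypotheses of \autoref{main-local-thm}. It is therefore safest to produce the isomorphism directly via a Čech/Leray cover by such $D_x$ together with interior balls, and to use that finite intersections of pseudoconvex domains are pseudoconvex. Their boundaries are only piecewise smooth, so we would instead use a cover whose multi-intersections we smooth, or apply the standard Čech--de Rham zigzag, which only needs vanishing on the cover sets after a refinement.

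\textbf{Step 2 (comparison of sheaf cohomologies).} It remains to show that the restriction map $H^q(\overline\Omega,\KE)\to H^q(\Omega,\KE)$ is an isomorphism, and that this map is the one induced by the inclusion $L^2\hookrightarrow L^{2,\loc}$. We use that $\KE$ is coherent. Take $\Omega_\epsilon=\{\rho<\epsilon\}$. For small $|\epsilon|$ the defining function is strictly psh near $\partial\Omega_\epsilon$ except at finitely many points. We apply Grauert's bumping technique (Andreotti--Grauert) to the coherent sheaf $\KE$. Bumps at strictly pseudoconvex points yield isomorphisms $H^q(\Omega_{\epsilon'},\KE)\cong H^q(\Omega_\epsilon,\KE)$ for $q>0$, by local Stein neighbourhoods and Mayer--Vietoris. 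For the exceptional points we use that $\Omega$ is still locally Stein there, so the bump sets remain Stein and the same Mayer--Vietoris argument works. Passing to the limit then gives both $\varinjlim_{\epsilon>0}H^q(\Omega_\epsilon)\cong H^q(\Omega)$ and the identification with $H^q(\overline\Omega)$.

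\textbf{Main obstacle.} The delicate part is the exceptional boundary points. There $\rho$ is only weakly pseudoconvex, so the nearby level sets $\{\rho<\epsilon\}$ might fail to be pseudoconvex near those points, and the bumping step there needs care. I would first try to modify $\rho$ near each exceptional point $p$ by adding $\delta\chi|z-p|^2$ with a cutoff $\chi$, making it strictly psh while changing $\Omega$ only inside a tiny ball, and then exploit the local vanishing on the pseudoconvex pieces $\Omega\cap B_p$ to glue.
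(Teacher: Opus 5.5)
There is a genuine gap in Step 1, on the $L^2$ side. The complex of sheaves on $\overline\Omega$ of forms that are $L^2$ up to the boundary is fine. Its local exactness at boundary points does follow from \autoref{main-local-thm} on rounded pseudoconvex pieces. But it does not resolve $\KE|_{\overline\Omega}$. Its degree-$0$ kernel is a sheaf $\mathscr{K}^{(2)}$ whose stalk at $x\in\partial\Omega$ consists of weakly holomorphic $n$-forms on $\Omega\cap W$ that are square-integrable up to $\partial\Omega$. Such forms need not extend to any neighbourhood of $x$ in $X$; already on the unit disc, $(z-1)^{-1/4}$ is an example. So $\mathscr{K}^{(2)}$ is strictly larger than $\KE|_{\overline\Omega}$ and is not the restriction of a coherent sheaf. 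What your resolution actually gives is $H^{n,q}_w(\Omega,E)\cong H^q(\overline\Omega,\mathscr{K}^{(2)})$, not $\varinjlim_\epsilon H^q(\Omega_\epsilon,\KE)$. Step 2, i.e.\ Andreotti--Grauert for coherent sheaves, says nothing about $\mathscr{K}^{(2)}$. Moreover, the restriction $H^q(\overline\Omega,\mathscr{K}^{(2)})\to H^q(\Omega,\KE)$ is exactly $\iota$. So Step 1 silently assumes the boundary comparison that the theorem asks you to prove.

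The missing idea is to bump directly on $L^2$ forms, as in \autoref{Aa:lem}. Given a closed $f\in L^2_{n,q}(\Omega_{i-1},E)$, solve $f=\overline{\partial}\tilde g$ on the $\sC^3$ pseudoconvex domain $D=\{\rho_{i-1}+\Phi<0\}$ by \autoref{main-local-thm}. Then cut off $\tilde g$ by $\chi$ and set $f'=f-\overline{\partial}(\chi\tilde g)$, extended by $0$ to $\Omega_i$. Iterating this shows that both $R\colon H^{n,q}_{w,\loc}(\Omega_\varepsilon,E)\to H^{n,q}_w(\Omega,E)$ and $R_\loc$ are surjective.

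This step needs only pseudoconvexity of the smaller domain at the exceptional points, so your worry about non-pseudoconvex bumps does not arise. Also, the level sets $\{\rho<\epsilon\}$ with $\epsilon\neq 0$ small are strictly pseudoconvex everywhere, because the exceptional points lie on $\partial\Omega$.

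For injectivity the paper does not bump across the exceptional points at all. It uses the Remmert reduction of $\Omega_\varepsilon$ and Theorem B on the Stein space $Z$ to show that $R_\loc$ is injective; this needs only that $\Omega$ contains the maximal compact analytic set. The relation $\iota\circ R=R_\loc$ then gives the isomorphism.

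Your Step 2 also has two weaker points, secondary to the error above:
\begin{itemize}
\item In degree $q=1$, Mayer--Vietoris alone gives only surjectivity; injectivity needs a Runge-type density argument.
\item The claim that the bumped sets near the exceptional points remain Stein is not justified as stated.
\end{itemize}
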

As explained in \autoref{rmk:surjectivity-in-weaker-setting} below,
the surjectivity of $H^{n,q}_{w}(\Omega,E) \rightarrow H^{n,q}_{w,\loc}(\Omega,E)$  %
is correct also in the weaker case that %
$\Omega$'s boundary is strictly pseudoconvex in all points
except of finitely many where it is pseudoconvex at least.
Also, the coherency of $\sK_E$ is not needed for the surjectivity of
$H^{n,q}_{w}(\Omega,E) \rightarrow H^{n,q}_{w,\loc}(\Omega,E)$, see \autoref{ssec:surjectivity}. %
\par
If $X$ is smooth and $\Omega$'s boundary is strictly pseudoconvex,
the equivalence $H^{n,q}_{w}(\Omega) \cong H^{n,q}_{w,\loc}(\Omega)$
is already well known for arbitrary $p$ and $q>0$, %
\cf \eg \Thm 4.1 in \cite[\Chap VIII]{LiebMichel02}.
By adapting the arguments from the smooth case,
the proof of \autoref{main-grauert} is done in \autoref{sec:equivalence}.
\bigskip \par
{\bf Acknowledgments.}{
The author is very grateful to
Jean Ruppenthal and Takeo Ohsawa for fruitful discussions.
This research was supported by the Deutsche Forschungsgemeinschaft (DFG, German Research Foundation),
grant RU 1474/2 within DFG's Emmy Noether Programme and grant SE 2677/1,
and
by JSPS KAKENHI grant number 25K00209.%
}
\medskip \par
\section{Proof of the local vanishing theorem}
\label{sec:local}
\smallskip \par
This section is dedicated to prove \autoref{main-local-thm}.
As mentioned, (many of) the presented arguments are a generalization
of the ones used to prove \Prop 2.1 in \cite{PardonStern91}.
A key ingredient is the following theorem of H. Donnelly and C. Fefferman (see \Thm 1.1 and \Prop 2.1 in \cite{DonnellyFefferman83})
with a simplified proof by T. Ohsawa in \cite[\Thm 1.1]{Ohsawa87Hodge}. %
\begin{thm}[DFO]%
\label{thm:DFO}
Let $(M,\omega)$ be a complete K\"ahler manifold of dimension $n$ and
let $E\rightarrow M$ be a Nakano semi-positive holomorphic vector bundle on $M$.
We assume %
$\omega=\ii \partial\overline{\partial} G$ for a smooth $G\colon M\rightarrow \RR$
with so-called self-bounded gradient, \ie there is a constant $C$ with  $|\partial G|_\omega \leq C$.
If $u$ is a $\overline{\partial}$-closed square-integrable $(n,q)$-form on $M$ with values in $E$,
then there exists a $v\in L^2_{n,q-1}(M,\omega;E)$
with $\overline{\partial} v= u$ and $\|v\|_\omega \leq 4 C \|u\|_\omega$,
\ie $H_w^{n,q}(M,E)=0$ for all $q>0$. %
\end{thm}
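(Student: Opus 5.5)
The plan is to prove Theorem~\ref{thm:DFO} along the lines of Ohsawa's simplified argument, in the form later popularised by Berndtsson: a Hörmander $L^2$-estimate with a weight built from $G$, combined with a \emph{twisting} of the minimal solution.

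\emph{Step 1: the weighted estimate.} Since $(M,\omega)$ is complete Kähler, the Bochner--Kodaira--Nakano inequality holds for all $E$-valued $(n,q)$-forms in $\dom\overline{\partial}\cap\dom\overline{\partial}^\ast$, because compactly supported smooth forms are dense in the graph norm. For a weight $\varphi=\delta G$ with $\delta>0$, the curvature term on $E\otimes e^{-\varphi}$ is
\[[\ii\Theta_E,\Lambda]+\delta[\omega,\Lambda]\geq \delta q\]
on $(n,q)$-forms. This uses the Nakano semi-positivity of $E$ and $[\omega,\Lambda]=q$ in bidegree $(n,q)$. The standard Hörmander duality argument then gives the following: for every $\overline{\partial}$-closed $f$, the solution of $\overline{\partial}w=f$ of minimal norm in $L^2(e^{-\varphi})$ satisfies
\[\int|w|^2e^{-\varphi}\leq \frac1{\delta q}\int|f|^2e^{-\varphi}.\]

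\emph{Step 2: the twisting.} Let $v$ be the minimal unweighted $L^2$-solution of $\overline{\partial}v=u$. We first assume that $u$ and $v$ are such that all integrals below are finite; this is addressed in Step~3. Since $v\perp\ker\overline{\partial}$ in $L^2$, the form $w:=ve^{\varphi}$ is orthogonal to $\ker\overline{\partial}$ in $L^2(e^{-\varphi})$. Hence $w$ is the minimal solution of
\[\overline{\partial}w=e^{\varphi}\big(u+\overline{\partial}\varphi\wedge v\big).\]
Applying Step~1 with the pointwise bound $|\overline{\partial}\varphi|_\omega=\delta|\partial G|_\omega\leq\delta C$ yields
\[\int|v|^2e^{\varphi}\leq\frac1{\delta q}\int\big(|u|+\delta C|v|\big)^2e^{\varphi}.\]
Expanding with Cauchy--Schwarz and choosing $\delta$ of order $1/C$ lets the $|v|^2$-term be absorbed into the left-hand side. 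This gives a bound $\|v\|\leq \mathrm{const}\cdot C\|u\|$. Optimising $\delta$ and $q\geq1$ should give the constant $4C$.

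\emph{Step 3: the main obstacle.} Since $G$ is unbounded, the factor $e^{\varphi}$ destroys the finiteness and the orthogonality arguments needed in Step~2. I would handle this by replacing $\varphi$ with $\chi_k(G)$, where $\chi_k$ are bounded, convex, increasing functions with $0\leq\chi_k'\leq\delta$, chosen so that $\chi_k\nearrow\delta G$. The bound $|\overline{\partial}\chi_k(G)|\leq\delta C$ is kept. The curvature becomes $\ii\partial\overline{\partial}\chi_k(G)\geq\chi_k'(G)\,\omega$, so it degenerates where $\chi_k'$ is small. To compensate, I would either mix in a small multiple of the full form, $\varepsilon\omega\leq\ii\partial\overline{\partial}(\varepsilon G)$, or use a twisted Ohsawa--Takegoshi estimate with twisting function $\eta=e^{-\chi_k(G)}$. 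The resulting constants must stay uniform in $k$. Passing to the limit with $k\to\infty$ by weak compactness then finishes the proof. I expect the curvature-degeneracy bookkeeping in this step to be the delicate point.
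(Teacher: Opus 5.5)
The paper does not prove this statement; it quotes Donnelly--Fefferman, Ohsawa and Ruppenthal. So the only question is whether your outline closes, and it does not: Step~2 ends with the wrong estimate. You take $v$ to be the minimal solution in \emph{unweighted} $L^2$ and then apply H\"ormander with the single weight $\varphi=\delta G$. What you actually obtain after absorbing is
\[\int|v|^2e^{\delta G}\le 16\,C^2\int|u|^2e^{\delta G}\qquad(q=1,\ \delta=\tfrac1{4C^2}).\]
This is an estimate in $L^2(e^{+\delta G})$ on both sides, not $\|v\|\le 4C\|u\|$. (Also, absorption needs $\delta C^2$ small, i.e.\ $\delta\sim C^{-2}$, not $C^{-1}$.)

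Since $G$ is unbounded, this weighted bound does not control $\int|v|^2$. In the application to \autoref{main-local-thm}, $G=\varphi_\varepsilon\to+\infty$ at $\partial D$ and $\to-\infty$ along $A$, so the right-hand side can even be infinite for $u\in L^2$. Step~3 does not repair this. With bounded $\chi_k(G)$ the factor $e^{\chi_k(G)}$ still sits on both sides, and its oscillation tends to infinity. So the only uniform conclusion in the limit is again the $e^{\delta G}$-weighted bound. Adding $\varepsilon G$ to restore curvature brings the unbounded weight back.

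The missing idea is to make the weights cancel. In Berndtsson's form, take $v$ minimal in $L^2(e^{-\varphi})$. Then $ve^{\varphi}\perp\ker\overline{\partial}$ in $L^2(e^{-2\varphi})$, and H\"ormander with weight $2\varphi$ gives the unweighted inequality $\int|v|^2\le\frac1{2\delta q}\int|u+\overline{\partial}\varphi\wedge v|^2$. For unbounded $G$ the finiteness issues you flagged would still remain.

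The clean way around them is to twist compactly supported test forms instead. For smooth compactly supported $E$-valued $(n,q)$-forms $\alpha$, apply Bochner--Kodaira--Nakano for the metric $he^{-\delta G}$ to $\beta=e^{\delta G/2}\alpha$. Here $\|\beta\|_{\delta G}=\|\alpha\|$, $\overline{\partial}\beta=e^{\delta G/2}(\overline{\partial}\alpha+\frac\delta2\overline{\partial}G\wedge\alpha)$ and $\overline{\partial}^\ast_{\delta G}\beta=e^{\delta G/2}(\overline{\partial}^\ast\alpha+\frac\delta2(\overline{\partial}G\wedge)^\ast\alpha)$. Hence
\[\delta q\|\alpha\|^2\le\big(\|\overline{\partial}\alpha\|+\tfrac{\delta C}2\|\alpha\|\big)^2+\big(\|\overline{\partial}^\ast\alpha\|+\tfrac{\delta C}2\|\alpha\|\big)^2 .\]
Taking $\delta=1/(2C^2)$ gives $\|\alpha\|^2\le 8C^2(\|\overline{\partial}\alpha\|^2+\|\overline{\partial}^\ast\alpha\|^2)$. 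Completeness extends this to $\dom\overline{\partial}\cap\dom\overline{\partial}^\ast$. The usual Riesz-representation argument then yields $\overline{\partial}v=u$ with $\|v\|\le 2\sqrt2\,C\|u\|\le 4C\|u\|$, up to the normalisation of $|\cdot|_\omega$. Unboundedness of $G$ never enters, so your Step~3 becomes unnecessary.
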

J. Ruppenthal proved this for $E$ being a semi-positive line bundle, %
see \cite[\Thm 3.2]{Ruppenthal14Duke}.
His proof generalizes to Nakano semi-positive vector bundles straight forwardly.
\smallskip \par
To apply the DFO theorem, we need a complete metric for the pseudoconvex setting
which is given by the following proposition.
\begin{prop}\label{lem:Ohsawa}
Let $X$ be an analytic set of pure-dimension $n$ in the unit ball $B\subset\CC^N$ and
let $D\Subset X$ be a relatively compact pseudoconvex domain in $X$ with $\sC^3$-smooth boundary.
Then, there exist a $\sC^3$-smooth defining function $\rho$ of $D$ and a positive $\delta >0$
such that $\varphi:=|z|^2-1+\frac \delta{\log (-\rho)}$ is a negative strictly plurisubharmonic function on $D$ and
the Hermitian metric $\ii\partial\overline{\partial}\varphi$ on $D_\reg=D\cap X_\reg$ is complete
with respect to $\partial D$ (\ie closed geodesic balls are relatively compact in $D$).
\end{prop}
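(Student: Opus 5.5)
The plan is to follow the Diederich--Fornæss approach, as used by Ohsawa and by Pardon--Stern, and to adapt it to the singular set $X$.

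\emph{Step 1: the defining function.} Start from any $\sC^3$ defining function $\rho_0$ of $D$, chosen in the ambient sense: near $\partial D$ it is the restriction of a $\sC^3$ function on a neighbourhood in $B$, and $d\rho_0\neq0$ on $X$ near $\partial D$. We take $\rho=\rho_0 e^{-K|z|^2}$ for a large constant $K$, or more generally $\rho=\rho_0 e^{-K\psi}$ with a strictly psh $\psi$. The aim is the Diederich--Fornæss type estimate
\[
\ii\partial\overline\partial(-\log(-\rho))\geq -c\,\omega
\]
near $\partial D$ on $X_\reg$, together with
\[
\ii\partial\overline\partial\Big(\frac{-1}{\log(-\rho)}\Big)\ge -c'\frac{\omega}{|\log(-\rho)|^{2}}.
\]
The key computation is
\[
\ii\partial\overline\partial\frac{\delta}{\log(-\rho)}=\frac{\delta}{(\log(-\rho))^{2}}\Big(\ii\partial\overline\partial(-\log(-\rho))+\frac{2\,\ii\partial\log(-\rho)\wedge\overline\partial\log(-\rho)}{-\log(-\rho)}\Big).
\]
Here $-\log(-\rho)$ is psh up to a bounded error, which is where the $\sC^3$ pseudoconvexity is used through a Taylor expansion of the Levi form in the normal direction. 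The gradient term is positive. Adding $|z|^2$, whose Levi form is $\omega$, absorbs the error term $\delta c\,\omega/(\log(-\rho))^{2}$ once $\delta$ is small, so $\varphi$ is strictly psh. Negativity is immediate, since $|z|^2<1$ and $\log(-\rho)<0$ once $\rho$ is normalised with $-\rho<1$.

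\emph{Step 2: pseudoconvexity on the singular space.} The main obstacle is obtaining the pseudoconvexity estimate on $X_\reg$ uniformly up to singular boundary points. The Levi form of $\rho$ is only semi-positive on the complex tangent space of $\partial D\cap X_\reg$, and we need uniform constants. I would work with ambient extensions, which are smooth on a neighbourhood in $B$ by $\sC^3$-smoothness. For every $x\in X_\reg$ near $\partial D$ and every $v\in T_xX$, decompose $v$ into its component in $T^{1,0}_x\{\rho=\rho(x)\}\cap T_xX$ and a normal part along the $T_xX$-gradient. The uniform bound $|d\rho|^{-1}\le C$ on $X_\reg$ controls this splitting.

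The tangential Levi form at level $\rho(x)$ is then compared with that at the nearest boundary point using the $\sC^3$ bound. The difficulty is that nearby points of $\partial D$ may have quite different tangent spaces $T X$ near singularities. To avoid this, I would instead use the Oka-type characterisation: pseudoconvexity of $D$ as a Stein space gives that $-\log(-\rho_0)+K|z|^2$ is psh near $\partial D$. I would prove this via the standard argument that $-\log d$, with $d$ a boundary distance in $X$, is psh, using a suitable Hartogs-type lemma applied on the normalisation or on discs in $X$. If that proves too hard, I would fall back on the pointwise Taylor estimate, with constants depending only on the $\sC^3$ norms of the ambient extension.

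\emph{Step 3: completeness.} Set $\tau=-\log(-\log(-\rho))$, which tends to $+\infty$ at $\partial D$. We have
\[
|\partial\tau|^2_{\ii\partial\overline\partial\varphi}\le \frac{\ii\partial\log(-\rho)\wedge\overline\partial\log(-\rho)/(\log(-\rho))^{2}}{2\delta\,\ii\partial\log(-\rho)\wedge\overline\partial\log(-\rho)/|\log(-\rho)|^{3}}=\frac{|\log(-\rho)|}{2\delta}.
\]
This bound is not bounded. So instead I would use $\sigma=\sqrt{-\log(-\rho)}$ or a similar function. Indeed $|\partial\log(-\rho)|^{2}_{\ii\partial\overline\partial\varphi}\lesssim|\log(-\rho)|^{3}/\delta$, so $\sigma'=|\log(-\rho)|^{-1/2}$ satisfies
\[
|d\sigma'|\lesssim |\log(-\rho)|^{-3/2}\,|d\log(-\rho)|\le C.
\]
Hence $\sigma'$ is Lipschitz, and it tends to $0$ exactly at $\partial D$. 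This gives the wrong direction for completeness. The right choice is $\log|\log(-\rho)|$: its gradient is bounded by $|\log(-\rho)|^{-1}|\log(-\rho)|^{3/2}$, which is again unbounded.

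So I would refine the lower bound, using the additional positivity coming from the $\ii\partial\log(-\rho)\wedge\overline\partial\log(-\rho)/(-\log(-\rho))$ term to bound $|\partial\log|\log(-\rho)||$ by a constant. This is exactly the self-bounded-gradient property of $1/\log(-\rho)$ that Theorem~(DFO) needs. With a bounded-gradient exhaustion function $f\to\infty$ at $\partial D$ in hand, the Hopf--Rinow-type argument gives that closed geodesic balls lie in sublevel sets $\{f\le c\}$, which are relatively compact in $D$. The singular points inside $D$ cause no trouble, since $\varphi$ is smooth up to them in the ambient space.
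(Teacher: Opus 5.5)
\textbf{There is a genuine gap, and it comes from a sign error in your key computation.}

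With $L:=\log(-\rho)<0$ one has
\[
\ii\partial\overline{\partial}\,\tfrac{1}{L}=\tfrac{1}{L^{2}}\Big(\ii\partial\overline{\partial}(-L)+\tfrac{2}{L}\,\ii\partial L\wedge\overline{\partial}L\Big).
\]
The gradient term therefore carries the factor $2/L<0$ and is \emph{negative}, not positive. Already for $\rho=\ReX z$ on $\CC$, the function $1/\log(-\rho)$ on $\{0<-\rho<1\}$ is subharmonic only where $-\rho<e^{-2}$. Your second target estimate has the same sign confusion: it is stated for $-1/\log(-\rho)$, the negative of the function appearing in $\varphi$.

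Consequently, $\ii\partial\overline{\partial}(-\log(-\rho))\geq -c\,\omega_0$ does not suffice by itself, whether you obtain it by a Taylor argument or an Oka-type argument. The remaining term
\[
-\tfrac{2\delta}{|L|^{3}}\,\ii\partial L\wedge\overline{\partial}L=-\tfrac{2\delta}{|L|^{3}}\,\tfrac{\ii\partial\rho\wedge\overline{\partial}\rho}{\rho^{2}}
\]
blows up at $\partial D$ and cannot be absorbed by $\ii\partial\overline{\partial}|z|^2$.

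What is missing is to keep a fixed fraction of the normal positivity contained in $\ii\partial\overline{\partial}(-L)=\tfrac{\ii\partial\overline{\partial}\rho}{-\rho}+\ii\partial L\wedge\overline{\partial}L$. Concretely, you need a bound near $\partial D$ of the form
\[
\ii\partial\overline{\partial}\rho\geq -C|\rho|\,\omega_0-\tfrac{1}{4|\rho|}\,\ii\partial\rho\wedge\overline{\partial}\rho .
\]
Together with $|L|\geq 4$, this gives $\ii\partial\overline{\partial}\tfrac1L\geq -C\omega_0+\tfrac{1}{4L^{2}}\ii\partial L\wedge\overline{\partial}L$.

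This is what the paper establishes, through $\beta=\tfrac{-\rho}{\sigma^2}\ii\partial\overline{\partial}\sigma\geq\ii\partial\overline{\partial}\rho+\tfrac{1}{-2\rho}\ii\partial\rho\wedge\overline{\partial}\rho$. The steps are:
\begin{itemize}
\item It splits $\xi=T+N$ into parts tangent and normal to the level sets of $\rho$.
\item It uses pseudoconvexity and $\sC^3$-smoothness to bound the tangential Levi form below by $-C|\rho|\,|T|^2$. This is your ``fallback'' pointwise Taylor estimate; the paper does not use an Oka-type argument.
\item It controls the mixed term by Cauchy--Schwarz with the small constant $|\rho|$.
\item It rescales $\rho$ so that $|\partial\rho(N)|^2$ dominates the resulting normal error.
\end{itemize}

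\textbf{The same error is why your Step~3 stalls.} The lower bound $\omega\geq\tfrac{2\delta}{|L|^3}\ii\partial L\wedge\overline{\partial}L$ that you use is not available. The correct one, coming from the retained normal term, is, near $\partial D$,
\[
\ii\partial\overline{\partial}\varphi\geq(1-C\delta)\,\omega_0+\tfrac{\delta}{4}\,\tfrac{\ii\partial\rho\wedge\overline{\partial}\rho}{\rho^{2}\log^{2}(-\rho)} .
\]
With it, the function $\log|\log(-\rho)|$, which tends to $+\infty$ at $\partial D$, has $\omega$-gradient bounded by a constant times $\delta^{-1/2}$. This is exactly the paper's completeness argument: along any path $\gamma$ running into $\partial D$, the $\omega$-length is bounded below by a multiple of $\big[\log(-\log(-\rho\circ\gamma))\big]$, which diverges.

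So the ``refined lower bound'' you defer to at the end is not an optional improvement. It is the central missing estimate, and it is needed for the strict plurisubharmonicity of $\varphi$ as well as for completeness.
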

This result is stated by T. Ohsawa in \cite[\Lem 2.1]{Ohsawa10} (published in \cite[p.~246]{Ohsawa12})
for pseudoconvex  domains in complex manifolds.
The proof generalizes straight forwardly to complex spaces,
yet we need to assume that the boundary is $\sC^3$-smooth.
Since the mentioned references do not include a proof, let us present the arguments here adjusted to our setting.
\begin{proof}
Let $\rho$ be a $\sC^3$-smooth defining function of $D$, \ie $D=\{\rho<0\}$.
We can assume that $\rho$ is $\sC^3$-smooth on a neighbourhood of $D$ in $\CC^N$.
By calculating the derivatives of $\sigma:=\frac 1{\log(-\rho)}$,
we get
\[
\alpha:=\ii\partial\overline{\partial} \sigma
= \tfrac{\sigma^2}{-\rho}\cdot
\big(\ii\partial\overline{\partial}\rho+\tfrac{1+2\sigma}{-\rho}\ii\partial\rho\wedge\overline{\partial}\rho\big).
\]
To prove the strict plurisubharmonicity of $\varphi$,
we are going to estimate $\alpha$ by $-C\omega_0$ from below
where $C$ is a large constant and $\omega_0:=\ii\partial\overline{\partial}|z|^2$.
\smallskip \par
We can assume %
$|\rho|<e^{-4}$ on $D$, \ie $\sigma>-\frac14$,
such that we obtain
\[\beta:=\tfrac{-\rho}{\sigma^2} \alpha
\geq \big(\ii\partial\overline{\partial}\rho+\tfrac{1}{-2\rho}\ii\partial\rho\wedge\overline{\partial}\rho\big).\]
Since $D$ is pseudoconvex,
$\ii\partial\overline{\partial}\rho$ is non-negative in $\partial D\cap X_\reg$ on the holomorphic tangent space of $\partial D \cap X_\reg$.
$\sC^3$-smoothness of $\rho$ (even in $X_\sing\cap\partial D$) implies that there is a neighbourhood $U$ of $\partial D$ and a constant $C$ such that
\begin{equation}\label{eq:beta-1}
\ii\partial\overline{\partial} \rho\big(x;T,\overline T\big)\geq -C |T|^2
\end{equation}
for all $x\in U^\ast:=U\cap D_\reg$ and all holomorphic tangent vectors $T$ of level sets of $\rho$ in $x$. %
Furthermore, $C$ can be chosen so large such that
\[\ReX \ii\partial\overline{\partial} \rho\big(x;T,\overline N\big)\geq -C|T|\cdot|N|\]
for all $x\in U^\ast$ and for all holomorphic tangent vectors $T$ and normal vectors $N$ (of $\rho$'s level sets) in $x$.
With the ``small constant/large constant'' technique, we get
\begin{equation}\label{eq:beta-2}
2\ReX \ii\partial\overline{\partial} \rho\big(x;T,\overline N\big)
\geq -2C|T|\,|N|\geq 2\rho C|T|^2-\tfrac C{-2\rho}|N|^2.
\end{equation}
By modifying $\rho$, we can assume that
$\ii\partial\rho\wedge \overline{\partial}\rho\big(x;N,\overline N\big)\geq 2C|N|^2$
and the estimates from above are still correct. %
For instance, replace $\rho$ by $\frac1{\tilde C}(e^{\tilde C\rho}-1)$ which is sufficient since $|d\rho|$ is uniformly strictly greater than 0 on $U^\ast$ by assumption (shrink $U$ if necessary).
Hence, we get
\begin{equation}\label{eq:beta-3}
\beta\big(x;N,\overline N\big)\geq \tfrac C{-2\rho} |N|^2 %
\end{equation}
on $U^\ast$.
Let $\xi=T+N$ be a holomorphic tangent vector of $U^\ast$ in $x\in U^\ast$
decomposed in $T$, tangential to the level set of $\rho$ and $N$, normal to the level set.
Combining \eqref{eq:beta-1}, \eqref{eq:beta-2} and \eqref{eq:beta-3}, we obtain %
\[\beta\big(x;\xi,\overline \xi\big)
= \ii\partial\overline{\partial}\rho\big(x;T,\overline T\big)
+  2\ReX\ii\partial\overline{\partial}\rho\big(x;T,\overline N\big) +\beta\big(x;N,\overline N\big)
\geq 3C\rho|T|^2.\] %
Furthermore,
\[
\alpha\big(x;\xi,\overline\xi\big)
=\tfrac {1}{-\rho\log^2(-\rho)}\beta\big(x;\xi,\overline\xi\big)
\geq \tfrac{-3C|T|^2}{\log^2(-\rho)}
\geq -C|T|^2\geq
-C \omega_0\big(x;\xi,\overline\xi\big)
\]
on $U^\ast$. %
Since $\omega=\ii\partial\overline{\partial}\varphi=\ii\partial\overline{\partial}\big(|z|^2+\delta\sigma\big)=\omega_0+\delta\alpha$,
we proved $\varphi$ is strictly plurisubharmonic on $U^\ast$ for $\delta<\frac1C$.
In particular, for any sufficiently small $\delta$, $\varphi$ is strictly plurisubharmonic on the whole $D$.%
\smallskip \par
Next, we show the completeness of $\omega$ with respect to $\partial D$. %
Let $\gamma\colon [0,1]\rightarrow X$ be a differentiable path with $\gamma([0,1))\subset D\cap X_\reg$ and $\gamma(1)\in\partial D$.
Since $\omega=\omega_0+\delta\alpha\geq \frac{\ii\delta\partial\rho\wedge\overline{\partial}\rho}{\rho^2\log^2(-\rho)}$, we get
\[	\|\gamma'(t)\|_\omega
\geq \frac{\delta\|\gamma'(t)\|_{\ii\partial\rho\wedge\overline{\partial}\rho}}{\rho(\gamma(t))\log(-\rho(\gamma(t)))}
= \frac{\delta|(\rho\circ\gamma)'(t)|}{\rho(\gamma(t))\log(-\rho(\gamma(t)))}\]
for $t\in[0,1)$. Therefore, %
\[	\int_0^{1-\varepsilon}\!\!\|\gamma'\|_{\omega} dt
\geq \int_0^{1-\varepsilon}\!\!\!\!\tfrac{\delta|(\rho\circ\gamma)'|}{\rho\circ\gamma\cdot\log(-\rho\circ\gamma)}dt
=\delta\big[\log(-\log(-\rho\circ\gamma))\big]_0^{1-\varepsilon}\rightarrow \infty
\quad\hbox{as }\varepsilon\rightarrow 0,\]
\ie $\gamma$ has infinity length with respect to $\omega$.
\end{proof}
\smallskip \par
The metric $\ii\partial\overline{\partial}\varphi$ from \autoref{lem:Ohsawa} does not necessarily have a self-bounded gradient
as it is required to apply \autoref{thm:DFO} (DFO).
Yet, this can be taken care of with the help of the following elementary lemmata.
\begin{lem}\label{lem:logarithm-self-bounded}
If $\varphi\colon M\rightarrow(-\infty, 0)$ is a $\sC^2$-smooth plurisubharmonic function a complex manifold $M$,
then the plurisubharmonic $\psi:=-\varepsilon\log(-\varphi)$, $\varepsilon>0$, satisfies the inequality
\[{\ii}\partial\psi\wedge \overline{\partial}\psi\leq\varepsilon\ii\partial\overline{\partial}\psi.\] %
\end{lem}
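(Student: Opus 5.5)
Direct computation will do it. Put $\tau:=-\varphi>0$, so $\psi=-\varepsilon\log\tau$. Since $\partial\tau=-\partial\varphi$, we get
\[
\partial\psi=-\varepsilon\frac{\partial\tau}{\tau}=\varepsilon\frac{\partial\varphi}{-\varphi},
\qquad
\ii\partial\psi\wedge\overline{\partial}\psi=\varepsilon^2\frac{\ii\partial\varphi\wedge\overline{\partial}\varphi}{\varphi^2}.
\]
Differentiating once more, i.e.\ applying the quotient rule to $\overline{\partial}\psi=\varepsilon\,\overline{\partial}\varphi/(-\varphi)$, gives
\[
\ii\partial\overline{\partial}\psi
=\varepsilon\frac{\ii\partial\overline{\partial}\varphi}{-\varphi}
+\varepsilon\frac{\ii\partial\varphi\wedge\overline{\partial}\varphi}{\varphi^2}.
\]
The $\sC^2$-smoothness of $\varphi$ is exactly what is needed for these second derivatives to make sense pointwise.

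Now both summands on the right are non-negative $(1,1)$-forms. The first is non-negative because $\varphi$ is plurisubharmonic and $-\varphi>0$. The second is non-negative because $\ii\partial\varphi\wedge\overline{\partial}\varphi(\xi,\overline\xi)=|\partial\varphi(\xi)|^2\geq0$. In particular $\psi$ is plurisubharmonic. Dropping the first summand, we get
\[
\varepsilon\,\ii\partial\overline{\partial}\psi\;\geq\;\varepsilon^2\frac{\ii\partial\varphi\wedge\overline{\partial}\varphi}{\varphi^2}=\ii\partial\psi\wedge\overline{\partial}\psi,
\]
as forms evaluated on every $(\xi,\overline\xi)$, which is the claimed inequality.

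There is no real obstacle. The only point needing care is the sign bookkeeping, namely that the derivative of $\log(-\varphi)$ contributes the gradient term $\ii\partial\varphi\wedge\overline{\partial}\varphi/\varphi^2$ with a positive sign in $\ii\partial\overline{\partial}\psi$. I would check this by evaluating both sides on a fixed vector $\xi$ and reducing to the one-variable identity $(\log f)''=f''/f-(f')^2/f^2$ along a complex line.
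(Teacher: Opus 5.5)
Your proposal is correct and is the same argument as the paper's proof: a direct computation of $\overline{\partial}\psi=\frac{\varepsilon}{-\varphi}\overline{\partial}\varphi$ and $\partial\overline{\partial}\psi=\frac{\varepsilon}{-\varphi}\partial\overline{\partial}\varphi+\frac{\varepsilon}{\varphi^2}\partial\varphi\wedge\overline{\partial}\varphi$. Your step of dropping the non-negative term $\varepsilon^2\frac{\ii\partial\overline{\partial}\varphi}{-\varphi}$ is the comparison the paper leaves implicit.
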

\begin{proof}
This follows directly from calculating the derivatives of $\psi$:
\[\overline{\partial} \psi = \tfrac \varepsilon{-\varphi}\overline{\partial}\varphi \hbox{\ and\ }
\partial\overline{\partial}\psi=\tfrac \varepsilon{-\varphi}\partial\overline{\partial}\varphi+\tfrac {\varepsilon}{\varphi^2}\partial\varphi\wedge\overline{\partial}\varphi.\]
\FormelQed
\end{proof}
\begin{lem}\label{lem:self-bounded}
Let $\alpha$ be a $(1,0)$-form and let $\omega$ be a positive $(1,1)$-form.
If $\ii \alpha\wedge\overline\alpha \leq \omega$, then $|\alpha|^2_\omega \leq \frac n2$.
\end{lem}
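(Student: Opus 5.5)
The statement is pointwise linear algebra, so I would fix a point $x$ and work only in the Hermitian vector space $(T^{1,0}_xM,\omega_x)$. Both the hypothesis $\ii\alpha\wedge\overline\alpha\leq\omega$ and the quantity $|\alpha|^2_\omega$ are invariant under unitary changes of frame. So I would choose a basis $e_1,\dots,e_n$ of $T^{1,0}_x M$ that is orthonormal for the Hermitian metric associated with $\omega$, with dual coframe $\theta^1,\dots,\theta^n$. I then expand $\alpha=\sum_j a_j\theta^j$.

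The key step is to test the inequality of $(1,1)$-forms on the basis vectors. Evaluating on $(e_j,\overline e_j)$ gives $\ii\alpha\wedge\overline\alpha(e_j,\overline e_j)=c\,|a_j|^2$ and $\omega(e_j,\overline e_j)=c'$. Here $c,c'$ are the normalization constants coming from the paper's convention relating a positive $(1,1)$-form to its metric: with $\omega=\tfrac{\ii}{2}\sum\theta^j\wedge\overline\theta^j$ one gets $c'=\tfrac12$ relative to $c=1$. The hypothesis then yields $|a_j|^2\leq c'/c$ for each $j$. Equivalently, I would take the trace with respect to $\omega$: $\operatorname{tr}_\omega(\ii\alpha\wedge\overline\alpha)\leq\operatorname{tr}_\omega\omega=n$, and the left-hand side equals a fixed multiple of $\sum_j|a_j|^2$.

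Summing over $j$ then gives $\sum_j|a_j|^2\leq n\,c'/c$. It remains to express $|\alpha|^2_\omega$ in terms of $\sum_j|a_j|^2$ under the induced metric on $(1,0)$-forms. In the convention where $\omega$ corresponds to $\tfrac{\ii}{2}\sum dz_j\wedge d\overline z_j$, one has $|\alpha|^2_\omega=\sum_j|a_j|^2$ up to the corresponding factor. Combining the two identities gives the constant $\tfrac n2$.

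The only real obstacle is bookkeeping the normalization constants so that the bound comes out as exactly $\tfrac n2$. I would first fix the convention by checking the model case $\omega=\omega_0$ on $\CC^n$ with $\alpha=dz_1$. I would then carry those constants through the trace argument above. Since the lemma is only used to obtain some uniform bound $|\partial\psi|_\omega\leq C$ for \autoref{thm:DFO}, any constant depending only on $n$ suffices. In particular, the crude trace estimate, rather than the sharp rank-one bound, is enough.
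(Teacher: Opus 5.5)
Your argument is the paper's: in an $\omega$-orthonormal coframe, with $\omega=\tfrac{\ii}{2}\sum_j e_j\wedge\overline e_j$ and $|\alpha|^2_\omega=\sum_j|\alpha_j|^2$, the Hermitian matrix $\big(\tfrac12\delta_{jk}-\alpha_j\overline\alpha_k\big)$ of $\omega-\ii\alpha\wedge\overline\alpha$ is positive semi-definite, so its trace $\tfrac n2-\sum_j|\alpha_j|^2$ is non-negative; your diagonal evaluations on $(e_j,\overline e_j)$, summed over $j$, are exactly this trace. Your side remark is also correct: positivity of $\tfrac12 I-aa^\ast$ with the rank-one matrix $aa^\ast$ even gives $|\alpha|^2_\omega\leq\tfrac12$, but the cruder constant $\tfrac n2$ is all that is needed for \autoref{thm:DFO}.
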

\begin{proof}
Let $e_j$ be an orthonormal base of $(1,0)$-forms with respect to $\omega$ (locally in $X_\reg$).
Then, $\omega=\frac \ii 2 \sum_j e_j\wedge \overline{e}_j$.
Writing $\alpha=\sum_j \alpha_j e_j$ gives $|\alpha|^2_\omega=\sum_j |\alpha_j|^2$. The assumption implies
\[0\leq \omega-\ii \alpha\wedge\overline\alpha=\ii\sum\nolimits_{j,k} \big(\tfrac12{\delta_{jk}}-\alpha_j\overline\alpha_k\big)e_j\wedge \overline{e}_k.\]
In particular, the trace of the Hermitian matrix associated to the form on the right side is non-negative,
\ie
\[0\leq \tr \big(\tfrac12\delta_{jk}-\alpha_j\overline\alpha_k\big)=\tfrac n2-\sum\nolimits_j |\alpha_j|^2.\]
\FormelQed[.7]
\end{proof}
For the convenience of the reader, let us also include a proof of the following basic fact.
\begin{lem}\label{lem:standard-self-bounded}
Let $z$ be the coordinates of $\CC^N$.
For $|z|<1$,  we get $\ii\partial|z|^2\wedge \overline{\partial}|z|^2 < \ii\partial\overline{\partial}|z|^2$.
\end{lem}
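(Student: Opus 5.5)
The plan is to compute both sides explicitly and compare them as Hermitian forms. Writing $z=(z_1,\dots,z_N)$, we have $\partial|z|^2=\sum_j \overline z_j\,dz_j$. Hence
\[
\ii\partial|z|^2\wedge\overline{\partial}|z|^2=\ii\sum\nolimits_{j,k}\overline z_j z_k\,dz_j\wedge d\overline z_k,
\qquad
\ii\partial\overline{\partial}|z|^2=\ii\sum\nolimits_{j}dz_j\wedge d\overline z_j .
\]
So the claim reduces to a statement about the Hermitian matrix $\delta_{jk}-\overline z_j z_k$: we must show it is positive definite whenever $|z|<1$.

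To check this, evaluate both forms on a tangent vector $\xi\in\CC^N$. The left side gives $|\langle \xi,\overline z\rangle|^2=|\sum_j\overline z_j\xi_j|^2$, and the right side gives $|\xi|^2$, up to the same normalising constant. By the Cauchy--Schwarz inequality,
\[
\Big|\sum\nolimits_j\overline z_j\xi_j\Big|^2\leq |z|^2|\xi|^2<|\xi|^2
\]
for every $\xi\neq0$, since $|z|<1$. This gives strict positivity of the difference on every nonzero vector, which is exactly the asserted inequality of $(1,1)$-forms.

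Equivalently, one can argue spectrally. The matrix $(\overline z_j z_k)$ has rank at most one, and its only nonzero eigenvalue is $|z|^2<1$. Therefore the eigenvalues of $\delta_{jk}-\overline z_j z_k$ are $1$ (with multiplicity $N-1$) and $1-|z|^2>0$.

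There is no real obstacle here. The only point needing care is to use the same sign and normalisation convention for both sides. A common factor such as the $\tfrac{\ii}{2}$ convention used in \autoref{lem:self-bounded} rescales both forms identically, so it does not affect the comparison.

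The inequality also restricts to $X_\reg$, because restricting to the tangent spaces of $X_\reg$ preserves positivity. This is the form in which the lemma will presumably be combined with \autoref{lem:self-bounded} later.
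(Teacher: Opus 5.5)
Your proof is correct and is essentially the paper's. The paper also computes the coefficient matrix of $\ii\partial\overline{\partial}|z|^2-\ii\partial|z|^2\wedge\overline{\partial}|z|^2$ and reads off its eigenvalues $1$ and $1-|z|^2$, which is your spectral variant; your Cauchy--Schwarz evaluation on $\xi$ is the same positivity check done directly. Your coefficient $\overline z_j z_k$ is the correctly indexed one, whereas the paper writes the conjugate matrix $z_j\overline z_k$, but that matrix has the same spectrum, so nothing changes.
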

\begin{proof} By calculating the derivatives of $|z|^2$, we obtain
\[\partial\overline{\partial}|z|^2-\partial|z|^2\wedge \overline{\partial}|z|^2 = \sum\nolimits_{j,k} (\delta_{jk}-z_j\overline z_k) dz_j\wedge d\overline z_k.\]
The matrix $(\delta_{jk}-z_j\overline z_k)_{j,k}$ has only the eigenvalues $1$ (with multiplicity at least $N-1$) and
$1-|z|^2$ (with $z$ as an eigenvector). %
Hence, it is positive definite for $|z|^2<1$ and so,
the form $\ii\partial\overline{\partial}|z|^2-\ii\partial|z|^2\wedge \overline{\partial}|z|^2$.
\end{proof}
\smallskip \par
\begin{proof}[Proof of \autoref{main-local-thm}]
Let $X$ be an analytic set of pure-dimension $n$ in the unit ball $B:=\{z\in\CC^N\colon |z|<1\}$ and
let $D\Subset X$ be a relatively compact pseudoconvex domain in $X$ with $\sC^3$-smooth boundary.
By \autoref{lem:Ohsawa},
there exists a negative strictly plurisubharmonic function $\varphi$ on $D$
such that $\ii\partial\overline{\partial}\varphi$ induces a $\partial D$-complete Hermitian metric on $D\cap X_\reg$.
\par
Let $X'\subset X_\reg$ be the complement of an analytic set containing $X_\sing$ and
let $(E,h)$ be a holomorphic vector bundle on $X'$ with Nakano semi-positive metric $h$.
\par
Fix $u\in L^{2,\loc}_{n,q}(D,E)=L^{2,\loc}_{n,q}(D;E,h)$ which is $\overline{\partial}$-closed.
Then, there exists a smooth (strictly) plurisubharmonic positive exhaustion function $\Phi$ of $D$ %
\st $ue^{-\Phi}\in L^2_{n,q}(D;E,h)$.
Let $h'$ be the (twisted/weighted) metric on $E$ defined by $\|\cdot\|_{h'}=\|e^{-\Phi}\,\cdot\|_{h}$.
Then, $h'$ is also Nakano semi-positive and $u\in L^{2}_{n,q}(D;E,h')$.
We are going to show that there exists a $v\in L_{n,q-1}^2(D; E,h')$ with $\overline{\partial} v= u$.
This proves the vanishing of the $L^2$- and $L^{2,\loc}$-cohomology groups simultaneously,
\ie both equalities in \autoref{main-local-thm}.
\smallskip \par
There is a holomorphic function $f$ on the ball $B$ bounded by $1$ such that
\[X_\sing\subset X{\setminus} X' \subset A:=\{z\in B\colon f(z)=0\}.\]
Inspired by the work of W. L. Pardon and M. A. Stern (see \Sec 2 in \cite{PardonStern91}), we define the smooth plurisubharmonic functions
\[	\varphi_0(z):= |z|^2, \
\psi_1:= -\log (-\varphi), \
\psi_2:= -\log (-\log  |f|^2)\hbox{\ and\ }
\varphi_\varepsilon:= \varphi_0+\varepsilon\psi_1+\varepsilon\psi_2\]
on $D':=D{\setminus} A$.
Then, the K\"ahler forms $\omega_\varepsilon:=\ii\partial\overline{\partial} \varphi_\varepsilon$, $\varepsilon >0$, give complete metrics on $D'$,
which decrease pointwise monotonically to the metric $\omega_0:=\ii\partial\overline{\partial} \varphi_0$.
Thereby, the completeness with respect to $\partial D$ is ensured by the $\psi_1$ term in $\varphi_\varepsilon$
and the completeness with respect to $A$ is induced by the $\psi_2$ term.
Since $|z|<1$, \autoref{lem:standard-self-bounded} implies
$\ii\partial|z|^2\wedge \overline{\partial}|z|^2 \leq \omega_0\leq \omega_\varepsilon$.
Using \autoref{lem:logarithm-self-bounded} and $\varepsilon<1$, we obtain
$\ii\partial (\varepsilon\psi_k)\wedge\overline{\partial} (\varepsilon\psi_k)\leq\varepsilon\ii\partial\overline{\partial}(\varepsilon\psi_k)\leq\omega_\varepsilon$ for $k=1,2$, as well.
Hence, \autoref{lem:self-bounded} implies
\[\big|\partial \varphi_\varepsilon\big|^2_{\omega_\varepsilon}
\leq \big|\partial |z|^2\big|^2_{\omega_\varepsilon}
+\big|\partial (\varepsilon\psi_1)\big|^2_{\omega_\varepsilon}
+\big|\partial (\varepsilon\psi_2)\big|^2_{\omega_\varepsilon}
\leq \tfrac {3}2 n,\]
\cf \cite[\Lem 2.4]{PardonStern91} (while our arguments simplify their proof).
Hence, we can apply \autoref{thm:DFO} (DFO)
with respect to every metric $\omega_\varepsilon$,
\ie for each $\varepsilon>0$,
there exists a $v_\varepsilon\in L^2_{n,q-1}(D',\omega_\varepsilon;E,h')$
with $\overline{\partial} v_\varepsilon=u$ and $\|v_\varepsilon\|_{\omega_\varepsilon,h'}\leq 4\cdot \frac32 n\|u\|_{\omega_\varepsilon,h'}$.
\par
Since the estimates are independent of $\varepsilon$,
we can apply the approximation theorem from below, \autoref{thm:incomplete-approximated-by-complete}, and
get a solution $v\in L^2_{n,q-1}(D',\omega_0; E, h')$ %
of $\overline{\partial} v= u$ with $\|e^{-\Phi}v\|_{\omega_0,h}=\|v\|_{\omega_0,h'}\leq 6 n\|u\|_{\omega_0,h'}$.
\par
A priori, the equation $\overline{\partial} v= u$ is only satisfied on $D'=D{\setminus} A$.
Yet, since we consider the Dolbeault operator in sense of distribution on square-integrable forms,
the equation extends beyond analytic sets by an $L^2$-version of Riemann's extension theorem,
see \eg \cite[\Thm 3.2]{Ruppenthal09IntJMath} or \cite[\Thm 5.6]{Sera15}.
That means $\overline{\partial} v=u$ on $D\cap X'$. %
\end{proof}
\smallskip \par
We used that if an incomplete Hermitian metric $\omega$ can be approximated in an appropriate way by complete metrics $\omega_k$,
then the vanishing of $L^2$-Dolbeault $(n,q)$-cohomology groups with respect to $\omega_k$
implies also the vanishing with respect to $\omega$.
The precise statement of this is as follows %
(see also \cite[\Thm 4.1]{Demailly82} and \cite[\Prop 4.1]{Ohsawa87Hodge}).
\begin{thm}[\Lem 2.3 in \cite{PardonStern91}]
\label{thm:incomplete-approximated-by-complete}
Let $M$ be a complex manifold,
let $E\rightarrow M$ be a holomorphic vector bundle on $M$ and %
let $\{\omega_k\}$ be a pointwise decreasing sequence of Hermitian metrics of $M$
which converges pointwise to a Hermitian metric $\omega_0$ of $M$.
If the $\overline{\partial}$\nbd{}equation is solvable for $\overline{\partial}$\nbd{}closed forms
in $L^2_{n,q}(M, \omega_k; E)$ with an estimate independent of $k$,
then the $\overline{\partial}$\nbd{}equation is solvable for $\overline{\partial}$\nbd{}closed forms
in $L^2_{n,q}(M, \omega_0; E)$ with the same estimate.
\end{thm}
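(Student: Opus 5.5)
The plan is a weak-limit argument that uses the special behaviour of $(n,q)$-forms under a change of metric. The key pointwise fact is the following. For an $(n,q)$-form $u$ and two Hermitian metrics $\omega\geq\omega_0$, one has
\[
|u|^2_{\omega}\,dV_{\omega}\leq |u|^2_{\omega_0}\,dV_{\omega_0}.
\]
To verify this, simultaneously diagonalise $\omega_0=\frac\ii2\sum e_j\wedge\overline e_j$ and $\omega=\frac\ii2\sum\lambda_j e_j\wedge\overline e_j$ with $\lambda_j\geq1$. The $(n,0)$-part contributes $\prod_j\lambda_j^{-1}$ to $|u|^2$, and this cancels exactly with $dV_\omega=\prod_j\lambda_j\,dV_{\omega_0}$. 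What is left is a factor $\lambda_J^{-1}\leq 1$ for each antiholomorphic index, which gives the inequality. Since the $\omega_k$ decrease pointwise to $\omega_0$, the same computation shows that $|u|^2_{\omega_k}dV_{\omega_k}$ increases monotonically to $|u|^2_{\omega_0}dV_{\omega_0}$ as $k\to\infty$. Write $C$ for the constant in the hypothesis, so that each $\overline\partial$-closed $w\in L^2_{n,q}(M,\omega_k;E)$ has a solution with $\|v\|_{\omega_k}\leq C\|w\|_{\omega_k}$.

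\textbf{Step 1.} Fix a $\overline\partial$-closed $u\in L^2_{n,q}(M,\omega_0;E)$. By the pointwise fact, $u\in L^2_{n,q}(M,\omega_k;E)$ with $\|u\|_{\omega_k}\leq\|u\|_{\omega_0}$. The hypothesis then gives $v_k\in L^2_{n,q-1}(M,\omega_k;E)$ with $\overline\partial v_k=u$ and $\|v_k\|_{\omega_k}\leq C\|u\|_{\omega_0}$.

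\textbf{Step 2.} For fixed $j$ and all $k\geq j$, monotonicity gives $\|v_k\|_{\omega_j}\leq\|v_k\|_{\omega_k}\leq C\|u\|_{\omega_0}$. So $(v_k)_{k\geq j}$ is bounded in the Hilbert space $L^2_{n,q-1}(M,\omega_j;E)$. A diagonal argument produces a subsequence converging weakly in every $L^2(\omega_j)$. The weak limits agree, since each is the limit in the sense of distributions on compact subsets, where all these norms are equivalent. Call the common limit $v$. Lower semicontinuity of the norm under weak convergence gives $\|v\|_{\omega_j}\leq C\|u\|_{\omega_0}$ for every $j$. Letting $j\to\infty$ and using monotone convergence of $|v|^2_{\omega_j}dV_{\omega_j}\nearrow|v|^2_{\omega_0}dV_{\omega_0}$, we obtain $\|v\|_{\omega_0}\leq C\|u\|_{\omega_0}$.

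\textbf{Step 3.} Weak convergence in $L^2(\omega_j)$ implies convergence in the sense of distributions. Pairing against compactly supported smooth test forms and using that $\overline\partial v_k=u$ weakly for every $k$, we get $\overline\partial v=u$ in the sense of distributions. Hence $v$ solves the equation in $L^2_{n,q-1}(M,\omega_0;E)$ with the same estimate.

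\textbf{Main obstacle.} The delicate point is the pointwise monotonicity. It uses crucially that the form degree is $n$ in the holomorphic variables, so that the volume factor cancels the $(n,0)$-part. It also needs to apply to the $(n,q-1)$-forms $v_k$, which is why the argument only works in bidegree $(n,\cdot)$. I would check this carefully in a simultaneous orthonormal frame, including the fibre metric of $E$, which is unaffected since it does not depend on $\omega$. The rest is standard weak compactness.
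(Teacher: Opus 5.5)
Your proposal is correct and follows essentially the same route as the proof the paper defers to (Pardon--Stern, Lemma 2.3; compare also Demailly's and Ohsawa's versions). That argument rests on the monotonicity of $|u|^2_{\omega}\,dV_{\omega}$ in the metric for forms of bidegree $(n,\cdot)$, uniform bounds on the solutions $v_k$ in each $L^2(\omega_j)$ for $k\geq j$, a diagonal weak limit, and monotone convergence to pass to $\omega_0$. Your observation that the fibre metric of $E$ does not depend on $\omega$ is exactly why the paper can say the scalar proof carries over to bundle-valued forms unchanged.
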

The proof in \cite{PardonStern91}  straight forwardly generalizes to
forms with values in vector bundles without any crucial changes.
\medskip \par
\section{Equivalence of \texorpdfstring{$L^2$- and $L^{2,\loc}$-}{L{\000\136}2- and L\000\136\{2,loc\}-}cohomology groups}
\label{sec:equivalence}
\smallskip \par
Throughout this section, we are in the following setting (as assumed in \autoref{main-grauert}). %
Let $X$ be a Hermitian complex space of pure-dimension $n$,
let $X'\subset X_\reg$ be the complement of an analytic set containing $X_\sing$ and
let $E\rightarrow X'$ be a holomorphic vector bundle with a metric $h$ which is locally Nakano semi-positive with respect to $X$,
see \autoref{df:loc-Nakano}.
\par
Let $\Omega=\{\rho<0\}\Subset X$ be a relatively compact  domain in $X$
where $\rho\colon X\rightarrow\RR$ is a $\sC^3$-smooth function
such that $d\rho\neq 0$ close to $\partial\Omega$ and
$\rho$ is strictly plurisubharmonic on a neighbourhood $U$ of $\partial \Omega$ except of finitely many points $x_1{,}...,x_m$. %
We can assume that $x_1{,}...,x_m$ is contained in $\partial\Omega$. %
We set $\Omega_\varepsilon:=\{\rho<\varepsilon\}$ for any number $\varepsilon$.
Let us assume that $U=\Omega_{\varepsilon_0}{\setminus}\overline{\Omega_{-\varepsilon_0}}$ for some fixed $\varepsilon_0>0$.
\par
Let $\iota\colon H^{n,q}_{w}(\Omega,E) \rightarrow H^{n,q}_{w,\loc}(\Omega,E)$ be the natural morphism induced by the embedding $L^2_{n,q}(\Omega,E) \hookrightarrow L^{2,\loc}_{n,q}(\Omega,E)$.
For $q>0$,
the goal of this section is to prove that $\iota$ is an isomorphism (proving \autoref{main-grauert}),
giving us the equivalence of $H^{n,q}_{w}(\Omega,E)$ and $H^{n,q}_{w,\loc}(\Omega,E)$.
\medskip \par
\subsection{Surjectivity}\label{ssec:surjectivity}
In order to prove that $\iota$ is surjective,
we extend (locally) square-integrable forms (with values in $E$) across the boundary of $\Omega$.
Of course, we cannot extend every (locally) square-integrable form.
Yet with Grauert's bump method (developed in \cite{Grauert58}),
we get an extension of $f$'s cohomology class.
This statement is already well-known in the literature for domains in manifolds whose boundary is strictly pseudoconvex everywhere, \cf \eg \S\,IV.7 in \cite{LiebMichel02}.
As the author is not aware of any published proof for the singular setting,
we present a proof which is similar to the one in the smooth case, yet includes the necessary adaptions to accommodate for the weaker assumption on the boundary
in our setting above (\ie as assumed in \autoref{main-grauert}).
\smallskip \par
Let $U_i\Subset U, i=1{,}...,t$, be open and pseudoconvex \st $\partial\Omega\subset\bigcup_i U_i$ and $U_i$ can be embedded in the unit ball of a $\CC^{N_i}$. %
We also assume that each point of $x_1{,}...,x_m$ is not contained in more than one $U_i$
and there are Nakano semi-positive metrics $h_i$ of $E|_{U_i}$
which are equivalent to $h$ on $U_i$, see \autoref{df:loc-Nakano}.
We choose $\psi_i\in \sC^\infty(X,[0,1])$ \st $\supp \psi_i\subset U_i$, $\sum \psi_i>0$ on $b\Omega$,
$\psi_i$ is constant on $\{x_1{,}...,x_m\}\cap U_i$  and
\[\rho_i:=\rho-\sum\nolimits_{k=1}^i \psi_i\in\sC^3(X)\cap\SPSH(U'),\]
where $\SPSH(U')$ is %
the space of strictly plurisubharmonic functions on $U':=U{\setminus}\{x_1{,}..,x_m\}$.
We set $\Omega_i:=\{\rho_i<0\}$, see \autoref{fig:bump}, and may assume that $d\rho_i\neq0$ close to $\partial\Omega_i$.
By construction, $\Omega_0=\Omega$, $\Omega_i\Subset \Omega_{\varepsilon_0}$ and $\Omega_i$ has a $\sC^3$-smooth pseudoconvex boundary which is even strictly pseudoconvex except of in $\{x_1{,}...,x_m\}\cap\partial\Omega_i$.
\par
\setcounter{figure}{\arabic{equation}}\stepcounter{equation}
\begin{figure}[!b]
\includegraphics{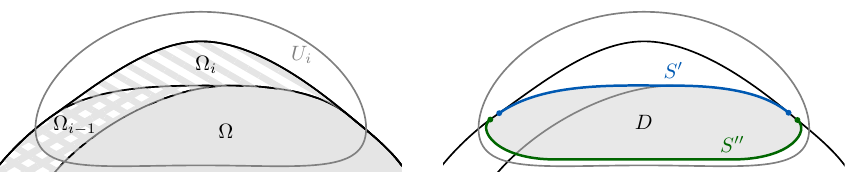} %
\caption{Sketches of the bump.}%
\label{fig:bump}
\end{figure}
\begin{lem}\label{Aa:lem}
(i) For every $\overline{\partial}_w$-closed $f\in L^2_{n,q}(\Omega_{i-1},E)$, there exist a $\overline{\partial}_w$-closed $f'\in L^2_{n,q}(\Omega_i,E)$ and a $g\in L^{2}_{n,q-1}(\Omega_{i-1},E)$ in $\dom\overline{\partial}_w$ \st on $\Omega_{i-1}$,
\[f=f'+\overline{\partial} g.\]
\par
(ii) The analogue $L^{2,\loc}$-statement holds as well.
\end{lem}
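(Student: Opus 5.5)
The idea is the classical Grauert bump step, with \autoref{main-local-thm} as the local solving tool on the pseudoconvex piece $U_i$.

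\textbf{Setup.} Write $V:=\Omega_{i-1}\cap U_i$. Since $\rho_i\le\rho_{i-1}$, we have $\Omega_{i-1}\subset\Omega_i$, and the two sets differ only inside $\supp\psi_i\subset U_i$. We would like $V$ to be a relatively compact pseudoconvex domain with $\sC^3$-smooth boundary, embedded in a ball. As given it is only an intersection of two pseudoconvex domains, so we replace it by a smoothed intersection. Choose a pseudoconvex $\sC^3$-domain $W_i$ with $\supp\psi_i\Subset W_i\Subset U_i$, for instance a sublevel set of a smooth strictly plurisubharmonic exhaustion of the Stein space $U_i$. Then take $V$ to be a domain with $\sC^3$-smooth pseudoconvex boundary satisfying
\[
\Omega_{i-1}\cap \supp\psi_i\subset V\subset \Omega_{i-1}\cap W_i,
\]
obtained by a regularized maximum of the two defining functions $\rho_{i-1}$ and the exhaustion. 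The regularized maximum of plurisubharmonic functions is plurisubharmonic, and pseudoconvexity survives at the corners.

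\textbf{Step 1: solve locally.} Let $\chi\in\sC^\infty_c(W_i)$ with $\chi\equiv1$ on a neighbourhood of $\supp\psi_i$, and $\supp\chi$ contained in the region where the smoothed boundary of $V$ agrees with $\partial\Omega_{i-1}$. By \autoref{main-local-thm}, applied with the Nakano semi-positive metric $h_i$ (equivalent to $h$, so the $L^2$-spaces coincide), there is $w\in L^2_{n,q-1}(V,E)$ with $\overline{\partial} w=f$ on $V$. In the $L^{2,\loc}$ case we use $H^{n,q}_{w,\loc}(V,E)=0$ instead.

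\textbf{Step 2: glue.} Put $g:=\chi w$, extended by $0$ to $\Omega_{i-1}$, and $f':=f-\overline{\partial}g=(1-\chi)f-\overline{\partial}\chi\wedge w$ on $\Omega_{i-1}$. On $\Omega_{i-1}\cap\{\chi=1\}$ this vanishes, so $f'$ extends by $0$ to $\Omega_i\setminus\Omega_{i-1}\subset\supp\psi_i$. The result is $\overline{\partial}_w$-closed on $\Omega_i$: near points of $\partial\Omega_{i-1}\cap\Omega_i$ it is identically $0$, and elsewhere $f'=f-\overline{\partial}g$ is closed. Moreover $f'$ is in $L^2$ (respectively $L^{2,\loc}$) because $\overline{\partial}\chi$ is bounded. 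Finally, $g\in\dom\overline{\partial}_w$ holds since $\overline{\partial}(\chi w)=\chi f+\overline{\partial}\chi\wedge w$ in the sense of distributions on $\Omega_{i-1}\cap X'$.

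\textbf{Step 3: the $L^{2,\loc}$ version.} Here the locally square-integrable forms must be taken with respect to $X$, that is, square-integrable on relatively compact subsets of $\Omega_{i-1}$. The extended $f'$ is square-integrable near points of $\partial\Omega_{i-1}\cap\Omega_i$ because it is $0$ there. Near $\partial\Omega_i$ it is also fine: where $\psi_i>0$ it is $0$, and elsewhere it equals $f$ locally.

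\textbf{Main obstacle.} The delicate step is constructing $V$ so that \autoref{main-local-thm} applies. Its boundary must be $\sC^3$ and pseudoconvex even at the exceptional points $x_k$ and at singular points of $X$, while still containing $\supp\psi_i\cap\Omega_{i-1}$ in its boundary region where it agrees with $\Omega_{i-1}$. I would first try to check that the regularized maximum keeps $d\rho\ne0$ and pseudoconvexity near the corner set $\partial\Omega_{i-1}\cap\partial W_i$. This corner set is disjoint from the $x_k$ because the $\psi_i$ are locally constant there and each $x_k$ lies in at most one $U_i$.
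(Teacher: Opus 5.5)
Your proposal is correct and follows essentially the paper's proof: you solve $f=\overline{\partial}\tilde g$ on an auxiliary pseudoconvex $\sC^3$ domain inside $\Omega_{i-1}\cap U_i$ via \autoref{main-local-thm}, using the equivalent Nakano semi-positive metric $h_i$; you multiply by a cut-off that equals $1$ near the bump and vanishes near the part of the auxiliary boundary lying inside $\Omega_{i-1}$; and you glue by zero across $\Omega_i\setminus\Omega_{i-1}$. The only difference is how the auxiliary domain is built: the paper takes $D=\{\rho_{i-1}+\Phi<0\}$, where $\Phi\geq 0$ is a plurisubharmonic exhaustion of $U_i$ flattened to vanish near $\Omega_i\setminus\Omega_{i-1}$, which gives pseudoconvexity and agreement of $D$ with $\Omega_{i-1}$ on a neighbourhood of the bump for free and avoids regularized maxima and corners altogether (the non-vanishing of the gradient, which you flag, is likewise only asserted there).
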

\begin{proof}
Let $\Phi$ denote a plurisubharmonic exhaustion function of $U_i$ such that $\Omega_i{\setminus}\Omega_{i-1} \Subset\{\Phi < 0\}$.
By taking the composition with a smooth convex increasing function $\RR \rightarrow\RR$ which is $0$ on $\RR^{-}$, %
we can assume that $\Phi$ is 0 on $\Omega_i{\setminus}\Omega_{i-1}$.
\par
Then,
$D:=\{\rho_{i-1}+\Phi < 0\} \subset \Omega_{i-1}$
is a $\sC^3$-smooth pseudoconvex bounded domain
which satisfies the following two properties: $S':= \overline{\partial \Omega_{i-1} \cap \Omega_i}$ is contained in $\partial D$ and $S'':= \overline{\partial D\cap \Omega_{i-1}}$ is disjoint from $S'$ (see \autoref{fig:bump}).
Let $f\in L^2_{n,q}(\Omega_{i-1},E)$ be $\overline{\partial}$-closed.
By \autoref{main-local-thm},
there is a solution $\tilde{g}\in L^{2}_{n,q-1}(D,E)$ for the equation $f=\overline{\partial} \tilde{g}$ on $D$.
We obtain this since our metric on $X$ embedded in the complex number space is equivalent to the standard metric on $D$ (induced by an embedding in the complex number space).
\par
Let $\chi\in \sC^{\infty}(X,[0,1])$ be a cut-off function
with $\supp \chi \subset U_i$, $\chi=0$ on a neighbourhood of $S''$  and $\chi|_{\Omega_i{\setminus}\Omega_{i-1}}=1$.
This $\chi$ can be chosen since $S'\cap S''=\emptyset$. We set
\[g:=\left\{ \begin{array}{ll} \chi\cdot \tilde g & \hbox{on } D,\\  0 & \hbox{on }\Omega_{i-1}{\setminus} D.\end{array}\right.\]
Then, $g$ is in $L^2_{n,q}(\Omega_{i-1},E)\cap \dom\overline{\partial}_w$. Now, we can set
\[f':=\left\{ \begin{array}{ll} f-\overline{\partial} g & \hbox{on }  \Omega_{i-1},\\ 0 & \hbox{on } \Omega_i{\setminus}\Omega_{i-1}.\end{array}\right.\]
$f'$ is well-defined since
\[\overline{\partial} g=\overline{\partial}\left(\chi\cdot \tilde g \right)= \overline{\partial} \tilde g = f\]
on a neighbourhood of $S'=\overline{\partial \Omega_{i-1} \cap \Omega_i}$ in $\Omega_{i-1}$. By construction, $f'\in L^2_{n,q}(\Omega_i,E)\cap\ker \overline{\partial}_w$.
\smallskip \par
(ii) can be proven by exactly same argumentation as (i).
\end{proof}
\begin{rem}\label{rem:bump}
Just considering one single step $i$,
it was not used that
$\rho_i$ is strictly plurisubharmonic outside of $x_1{,}...,x_m$ %
to obtain the extension of $[f]$ from $\Omega_{i-1}$ to $\Omega_{i}$.
We just used that $D$ is pseudoconvex (as more is not needed in \autoref{main-local-thm}),
\ie $\Omega_{i-1}$'s boundary is pseudoconvex and
$S'= \overline{\partial \Omega_{i-1} \cap \Omega_i}$ is disjoint from $S''= \overline{\partial D\cap \Omega_{i-1}}$.
In other words, $\Omega_i$ can be an arbitrary bump changing just a small part of the boundary
although it does not necessarily have a pseudoconvex boundary itself.
\end{rem}
\smallskip \par
As $\Omega_\varepsilon\subset \Omega_t$ for small enough $\varepsilon$, applying \autoref{Aa:lem} recursively gives us the following.
\begin{cor}\label{surjectivity}
(i) For $q>0$ and for every $f\in L^2_{n,q}(\Omega,E)$ with $\overline{\partial} f=0$,
there exist an $f'\in L^2_{n,q}(\Omega_\varepsilon,E)\cap\ker\overline{\partial}_w$ and
a $g\in L^{2}_{n,q-1}(\Omega,E)\cap \dom\overline{\partial}_w$ with $f=f'+\overline{\partial} g$ on $\Omega$.
In particular, the map $R\colon H_{w,\loc}^{n,q}(\Omega_\varepsilon,E)\rightarrow H_w^{n,q}(\Omega,E)$
induced by the restriction is surjective for all $q>0$.
\par
(ii) The analogue $L^{2,\loc}$-statement holds as well, \ie the map $R_\loc\colon H_{w,\loc}^{n,q}(\Omega_\varepsilon,E)\rightarrow H_{w,\loc}^{n,q}(\Omega,E)$ induced by the restriction is surjective for all $q>0$.
\end{cor}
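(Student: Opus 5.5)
We obtain the corollary by applying \autoref{Aa:lem} once for each bump $i=1,\dots,t$ and then restricting to $\Omega_\varepsilon$.

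\textbf{Iteration for (i).} Start from a $\overline{\partial}_w$-closed $f=:f_0\in L^2_{n,q}(\Omega,E)$, with $\Omega=\Omega_0$. By \autoref{Aa:lem}\,(i) there are a $\overline{\partial}_w$-closed $f_i\in L^2_{n,q}(\Omega_i,E)$ and $g_i\in L^2_{n,q-1}(\Omega_{i-1},E)\cap\dom\overline{\partial}_w$ with $f_{i-1}=f_i+\overline{\partial} g_i$ on $\Omega_{i-1}$. Since $\Omega=\Omega_0\subset\Omega_1\subset\dots\subset\Omega_t$, each $g_i$ restricts to $\Omega$. The restriction stays square-integrable and in $\dom\overline{\partial}_w$, because the weak $\overline{\partial}$ is local and $\Omega$ is open. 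Summing the identities on $\Omega$ gives
\[f=f_t+\overline{\partial}\Big(\sum\nolimits_{i=1}^t g_i|_{\Omega}\Big)\quad\text{on }\Omega.\]

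\textbf{Restriction to $\Omega_\varepsilon$.} Next I justify $\Omega_\varepsilon\subset\Omega_t$ for small $\varepsilon>0$. Since $\sum_i\psi_i>0$ on the compact set $\partial\Omega$, it is bounded below by some $c>0$ on a neighbourhood of $\partial\Omega$. Hence $\rho_t=\rho-\sum_i\psi_i<0$ on $\{\rho<\varepsilon\}$ for $\varepsilon<c$. Here I use that $\{0\le\rho<\varepsilon\}$ lies in that neighbourhood for small $\varepsilon$, which follows from compactness and $d\rho\ne0$ near $\partial\Omega$. Setting $f':=f_t|_{\Omega_\varepsilon}$ and $g:=\sum_i g_i|_\Omega$ proves the first claim, since $\Omega_\varepsilon$ is relatively compact in $\Omega_t$.

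\textbf{Surjectivity of $R$.} Any $L^2$ form on $\Omega_\varepsilon$ is in particular $L^{2,\loc}$ on $\Omega_\varepsilon$. Because $\Omega\Subset\Omega_\varepsilon$, the restriction of an $L^{2,\loc}(\Omega_\varepsilon)$-form to $\Omega$ is in $L^2(\Omega)$, so $R$ is well defined. The class $[f]$ is then the image of $[f']\in H^{n,q}_{w,\loc}(\Omega_\varepsilon,E)$.

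\textbf{The $L^{2,\loc}$ version (ii).} The same iteration works with \autoref{Aa:lem}\,(ii) in place of (i). A form in $L^{2,\loc}(\Omega_i)$ restricts to $L^{2,\loc}(\Omega)$, since any $U\Subset\Omega$ satisfies $U\Subset\Omega_i$.

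The argument is essentially bookkeeping. The only point needing care is the inclusion $\Omega_\varepsilon\subset\Omega_t$, where the positivity of $\sum_i\psi_i$ on $\partial\Omega$ enters.
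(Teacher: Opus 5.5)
Your proposal is correct and follows the paper's own argument: apply the bump lemma recursively for $i=1,\dots,t$, add up the primitives restricted to $\Omega$, and then restrict $f_t$ to $\Omega_\varepsilon\subset\Omega_t$. You also justify $\Omega_\varepsilon\subset\Omega_t$ for small $\varepsilon$ (using $\sum_i\psi_i>0$ on $\partial\Omega$ together with compactness) and check that $R$ and $R_\loc$ are well defined, which the paper only asserts.
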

\begin{rem}\label{rmk:surjectivity-in-weaker-setting}
Actually, the statement of \autoref{surjectivity} is correct as well
if $\Omega$'s boundary is just pseudoconvex in $x_1{,}...,x_m$
and strictly pseudoconvex in all other points,
\ie there is (only) a defining function $\rho$ of $\Omega$
which is strictly plurisubharmonic on a neighbourhood of $\partial\Omega{\setminus}\{x_1{,}...,x_m\}$
and not necessarily strictly plurisubharmonic on $U{\setminus}\{x_1{,}...,x_m\}$.
To prove this,
we can first extend $[f]$ along the boundary $\partial\Omega$ without arbitrary small neighbourhoods of $x_1{,}...,x_m$;
then, we can extend $[f]$ in $x_1{,}...,x_m$ following \autoref{rem:bump}.
\end{rem}
\medskip \par
\subsection{Injectivity}%
For the proof of the injectivity, we are using the Remmert reduction. Therefore, let us recall the following first.
\begin{lem}[Satz 3 in \S\,2 of \cite{Grauert62}]\label{Abeta:lem}
If $D\Subset X$ is a domain with a strictly pseudoconvex, smooth boundary,
then there exists a compact set $K\subset D$
such that each compact analytic set $A\subset D$, $\dim A>0$, is contained in $K$.
\end{lem}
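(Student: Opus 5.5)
Since $D$ has smooth strictly pseudoconvex boundary, we can choose a $\sC^2$ defining function $\rho$ with $D=\{\rho<0\}$. It is strictly plurisubharmonic on a neighbourhood $V=\{-\varepsilon_1<\rho<\varepsilon_1\}$ of $\partial D$. The plan is to show that no compact analytic set of positive dimension in $D$ can meet the collar $\{-\varepsilon_1<\rho<0\}$ near its highest point, and then take $K:=\{\rho\le-\varepsilon_1/2\}$. Up to shrinking, $K$ is a compact subset of $D$, since $\overline D$ is compact.

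The key step is a maximum principle on compact analytic sets. Let $A\subset D$ be compact analytic with $\dim A>0$, and let $A'$ be an irreducible component of positive dimension. Since $A'$ is compact, $\rho|_{A'}$ attains its maximum $c<0$ at some point $a\in A'$. Suppose $c>-\varepsilon_1/2$, so that $a\in V$. Near $a$, the function $\rho$ is strictly plurisubharmonic. In the sense of footnote~\ref{fn:psh}, this means that locally, in an embedding into some $\CC^N$, there is a smooth extension which stays plurisubharmonic under small smooth perturbations. In particular, $\rho-\eta|z-a|^2$ is plurisubharmonic near $a$ for small $\eta>0$. Restricted to $A'$, this is a plurisubharmonic function on a positive-dimensional irreducible analytic set.

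To get a contradiction, I would pass to a one-dimensional piece. Through $a$ there is an irreducible analytic curve $C\subset A'$, obtained by intersecting $A'$ near $a$ with generic linear subspaces of the right codimension. Its normalization is a disc $\Delta\to C$ mapping $0\mapsto a$. Pulling back $\rho-\eta|z-a|^2$ gives a subharmonic function on $\Delta$. Since $a$ maximizes $\rho$ on $A'$, we have
\[
\rho-\eta|z-a|^2\le c-\eta|z-a|^2<c
\]
away from $0$, with value $c$ at $0$. Applying the sub-mean value property on small circles yields $c\le \text{average}<c$, which is a contradiction. Hence $\max_{A'}\rho\le-\varepsilon_1/2$, that is, $A'\subset K$. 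Every point of $A$ lies on a positive-dimensional component, because a compact analytic set of positive dimension may also contain isolated points. So to conclude that $A\subset K$, I would read $\dim A>0$ as pure positive dimension. Alternatively, one applies the argument componentwise and states the result for all positive-dimensional components, which is what the Remmert reduction needs.

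The main obstacle is the strict plurisubharmonicity at singular points of $X$. The extension of $\rho$ to the ambient space need not be strictly psh there; only perturbation-stability is given. This is exactly why I use the perturbation $-\eta|z-a|^2$ rather than a Levi form computation. The existence of a local curve through $a$ inside $A'$, together with its normalization, is standard local analytic geometry.
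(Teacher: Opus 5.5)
Your proof is correct. The paper gives no proof of this lemma; it only cites Grauert's Satz~3. What you supply is the standard maximum-principle proof. You maximize the global defining function $\rho$ over a positive-dimensional irreducible component $A'$. You then exclude a maximum point $a$ in the inner collar by restricting $\rho-\eta|z-a|^2$, which is still plurisubharmonic near $a$, to the normalization of a curve germ of $A'$ through $a$.

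This makes the lemma self-contained and compatible with the paper's perturbation-stable notion of strict plurisubharmonicity at singular points. The price is assuming a single defining function that is strictly plurisubharmonic on a whole neighbourhood $W$ of $\partial D$. The paper's definitions allow this, and so does its only application, $D=\Omega_{-\varepsilon}$.

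Two small points:
\begin{itemize}
\item That the inner collar $\{-\varepsilon_1<\rho<0\}$ lies in $W$ follows because $\overline{D}\setminus W$ is a compact subset of $D$. Hence $\rho\le-\varepsilon_2<0$ there, and $\varepsilon_1<\varepsilon_2$ suffices. The outer half of your $V$ plays no role.
\item Your sentence claiming that \emph{every} point of $A$ lies on a positive-dimensional component says the opposite of what you mean. It should say \emph{not every} point does.
\end{itemize}

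Your remark that $\dim A>0$ must be read as pure positive dimension is a genuine and correct correction of the statement as quoted. If $A'\subset D$ is a positive-dimensional compact analytic set and $p\in D\setminus K$, then $A'\cup\{p\}$ contradicts the literal statement. The paper uses the lemma only to bound the degeneracy set and obtain the compact set $A_{\max}$ for the Remmert reduction. For that, only positive-dimensional irreducible components matter, so nothing downstream is affected.
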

Although, some points of $\Omega$'s boundary might not be strictly pseudoconvex in our setting,
$\Omega_{-\varepsilon}=\{\rho<-\varepsilon\}$'s boundary is strictly pseudoconvex everywhere for all small enough $\varepsilon>0$.
Following Remmert, the degeneracy set, which is the union of all compact analytic sets of positive dimension inside a complex space, is an analytic set, see \cite{Remmert57}.
With \autoref{Abeta:lem} for $D=\Omega_{-\varepsilon}$,
we get a maximal compact analytic set $A_{\max}$ in $\Omega_{-\varepsilon}$.
So, $A_{\max}$ also maximal in $\Omega$ and $\Omega_\varepsilon$ in our setting.
\smallskip \par
\begin{thm}\label{R-loc-injectivity}
In the setting above, we assume also that $\KE$%
\footnote{Please recall that $\KE$ is the sheaf of locally square-integrable weakly holomorphic $n$\nbd forms with values in $E$,
\ie $\KE(B)$ is the kernel of $\overline{\partial}_{w,\loc}$ in $L^{2,\loc}_{n,0}(B,E)$ for any open $B\subset X$.}
is a coherent analytic sheaf on $X$.
Then,
the map $R_\loc\colon H^{n,q}_{w,\loc}(\Omega_\varepsilon,E)\rightarrow H^{n,q}_{w,\loc}(\Omega,E)$
induced by the restriction is injective for all small enough $\varepsilon>0$ and $q>0$.
\end{thm}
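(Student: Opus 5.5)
We will identify both sides with sheaf cohomology of the coherent sheaf $\KE$ and then use the Remmert reduction.

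\emph{Step 1: fine resolution.} By \autoref{main-local-thm}, applied to small pseudoconvex $\sC^3$-bounded neighbourhoods (e.g. intersections of $X$ with small balls in local embeddings, where a Nakano semi-positive metric $h'$ equivalent to $h$ exists by \autoref{df:loc-Nakano}), the complex of sheaves
\[0\rightarrow\KE\rightarrow\mathscr{L}^{n,0}\xrightarrow{\overline{\partial}_{w,\loc}}\mathscr{L}^{n,1}\xrightarrow{\overline{\partial}_{w,\loc}}\cdots\]
is exact. Here $\mathscr{L}^{n,q}(V):=\{u\in L^{2,\loc}_{n,q}(V,E)\colon\overline{\partial}_{w,\loc}u\in L^{2,\loc}_{n,q+1}(V,E)\}$. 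Every point of $X$ has a neighbourhood basis of such domains, so the complex is exact at the stalk level. The sheaves $\mathscr{L}^{n,q}$ are fine, since they are modules over smooth functions on $X$ with bounded differentials. Hence, for every open $V\subset X$, we get $H^{n,q}_{w,\loc}(V,E)\cong H^q(V,\KE)$ naturally, and the map $R_\loc$ becomes the restriction $H^q(\Omega_\varepsilon,\KE)\rightarrow H^q(\Omega,\KE)$.

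\emph{Step 2: Remmert reduction.} For small $\varepsilon>0$ the domains $\Omega$ and $\Omega_\varepsilon$ are holomorphically convex. Their boundaries are strictly pseudoconvex away from the finitely many points $x_j$ and pseudoconvex there. By \autoref{Abeta:lem} and the discussion after it, $\Omega_{-\varepsilon}$, $\Omega$ and $\Omega_\varepsilon$ share the same maximal compact analytic set $A_{\max}$. I would justify holomorphic convexity via Grauert's solution of the Levi problem applied to $\Omega_\varepsilon$, whose boundary $\{\rho=\varepsilon\}$ is strictly pseudoconvex everywhere for small $\varepsilon>0$, since $\rho$ is strictly plurisubharmonic on $U{\setminus}\{x_j\}$ and the $x_j$ lie on $\partial\Omega$. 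So $\Omega_\varepsilon$ admits a Remmert reduction $\pi\colon\Omega_\varepsilon\rightarrow Y$, where $Y$ is Stein, $\pi$ is proper, and $\pi$ contracts exactly the connected components of $A_{\max}$ to points. Since $\Omega\supset A_{\max}$ and $\Omega$ is a union of fibres (every fibre is either a point or a component of $A_{\max}\subset\Omega_{-\varepsilon}$), $\Omega=\pi^{-1}(\pi(\Omega))$.

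\emph{Step 3: Leray and Stein.} By Grauert's direct image theorem, the sheaves $R^q\pi_\ast\KE$ are coherent on $Y$. For $q>0$ they are supported on the finite set $\pi(A_{\max})$, which lies in $\pi(\Omega)$. The Leray spectral sequence gives
\[H^q(\Omega_\varepsilon,\KE)\cong H^0(Y,R^q\pi_\ast\KE)\oplus(\text{terms }H^p(Y,R^s\pi_\ast\KE),\ p>0).\]
The latter terms vanish on the Stein space $Y$. So $H^q(\Omega_\varepsilon,\KE)\cong\Gamma(Y,R^q\pi_\ast\KE)$, which is a finite sum of stalks at points of $\pi(A_{\max})$.

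The main obstacle is the analogous statement for $\Omega$, since $\pi(\Omega)$ need not be Stein. I would handle it as follows. By Step 1 of the surjectivity argument, \autoref{surjectivity}(ii), $R_\loc$ is already surjective. For injectivity, take a class $c\in H^q(\Omega_\varepsilon,\KE)$ with $c|_\Omega=0$. Its image in $\Gamma(Y,R^q\pi_\ast\KE)$ is determined by its germs along $A_{\max}$, that is, by restrictions to $\pi^{-1}(W)$ for small Stein neighbourhoods $W$ of the points of $\pi(A_{\max})$ inside $\pi(\Omega)$. These restrictions factor through $\Omega$ and hence vanish. Since the isomorphism of Step 3 is injective, $c=0$. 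This gives the injectivity of $R_\loc$ for all small $\varepsilon>0$ and $q>0$.
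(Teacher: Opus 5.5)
Your proof is correct, but after the common first steps it takes a different route from the paper. Both arguments use the same ingredients:
\begin{itemize}
\item the fine resolution of $\KE$ by $\overline{\partial}_{w,\loc}$ furnished by the local vanishing theorem;
\item the Remmert reduction $\pi\colon\Omega_\varepsilon\to Y$ of the strictly pseudoconvex domain $\Omega_\varepsilon$;
\item Theorem B for the coherent sheaf $\pi_\ast\KE$ on the Stein space $Y$.
\end{itemize}

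The paper then argues by hand with \v{C}ech cochains on a Stein cover of $\Omega_\varepsilon$ split into index sets $I_1,I_2,I_3$. Sets indexed by $I_2\cup I_3$ avoid $A_{\max}$, and sets indexed by $I_1$ and $I_3$ are disjoint. The paper extends the primitive $\xi'$ from $\Omega$ by zero and notes that the remaining cocycle $\eta^{(2)}$ vanishes whenever an index lies in $I_1$. It then pushes this cocycle down to a cover of $Y$ and kills it there with Theorem B.

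You instead collapse the Leray spectral sequence. Since $R^s\pi_\ast\KE$ for $s>0$ is supported on the finite set $\pi(A_{\max})$, and $\pi_\ast\KE$ is coherent on a Stein space, the edge map identifies $H^q(\Omega_\varepsilon,\KE)$ with $\bigoplus_{z\in\pi(A_{\max})}(R^q\pi_\ast\KE)_z$. A class vanishing on $\Omega$ then has zero germs, because $\pi^{-1}(W)\subset\Omega$ for small $W\ni z$ by properness.

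Your version is more conceptual and yields more. It gives finite-dimensionality of $H^{n,q}_{w,\loc}(\Omega_\varepsilon,E)$ and injectivity of restriction to any open neighbourhood of $A_{\max}$, so the saturation $\Omega=\pi^{-1}(\pi(\Omega))$ is not even needed. The paper's version stays elementary, avoiding spectral sequences and higher direct images.

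Some of your side remarks are unnecessary or loose:
\begin{itemize}
\item Coherence of $R^s\pi_\ast\KE$ for $s>0$ is not needed, since skyscraper sheaves are flasque.
\item The holomorphic convexity of $\Omega$ you assert is neither justified nor used.
\item The surjectivity of $R_\loc$ plays no role in the injectivity argument.
\item The Leray spectral sequence gives a filtration rather than a direct sum. This is harmless here, since the other $E_2$-terms vanish.
\end{itemize}
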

For $X$ being a manifold and $\partial\Omega$ being everywhere strictly pseudoconvex,
this fact is well-known, see for instance \Thm 3.14 in \cite[\Chap VI]{LiebMichel02}.
Although the proof is not much different in our setting, we include it for the convenience of the reader.
\begin{proof}
Let $\varepsilon>0$ be so small such that $\partial\Omega_\varepsilon$ is strict pseudoconvex,
\ip $\Omega_\varepsilon$ is holomorphically convex. Hence, we have the Remmert reduction $\pi\colon \Omega_\varepsilon\rightarrow Z$, where $Z$ is a Stein space, $\pi$ is proper and holomorphic, $\pi(A)=Z_0$ and $\pi\colon\Omega_\varepsilon{\setminus} A\rightarrow Z{\setminus} Z_0$ is biholomorphic, see \cite{Remmert56}.
$A=A_{\max}$ is the maximal compact analytic set in $\Omega\subset\Omega_\varepsilon$ and
$Z_0$ is the union of finitely many points
since it is compact and analytic in a Stein space.
\smallskip \par
We choose an open Stein cover $\aU_\varepsilon=\{U_i\}_{i\in I}$ of $\Omega_\varepsilon$
with $I=I_1 {\dot \cup}  I_2 {\dot \cup}  I_3$ such that
\AddItem[Omega-cover]~$\aU:=\{U_i\}_{i\in I_1\cup I_2}$ covers $\Omega$,
\AddItem[disjointX] $U_{ij}=\emptyset$\,\footnote{%
We use the usual index notation for intersections, \ie
$U_{i_0...i_q}:=\bigcap_{k=0}^qU_{i_k}$.%
}
for $i\in I_1$ and $j\in I_3$ and
\AddItem[unchangedX] $U_i\cap A=\emptyset$ for $i\in I_2\cup I_3$.
\par
By \autoref{main-local-thm} (\ip using that $E$ is locally Nakano semi-positive with respect to $X$),
$\overline{\partial}_{w,\loc}$ gives us a free resolution of $\KE$, see also \cite[\Thm 3.1]{Ruppenthal14Duke} and \cite[\Thm 10.5]{Sera15}.
Therefore, the Formal de Rham Lemma (see \eg \S\,1.3 in \cite[\Chap B]{GrauertRemmertTSS})
infers that the $\overline{\partial}_{w,\loc}$-Dolbeault $(n,q)$-cohomology group is isomorphic to the $q$-sheaf cohomology group of $\KE$.
We get the following commutative diagram:
\[\xymatrix{
H^{n,q}_{w,\loc}(\Omega_\varepsilon,E)\ar[d]_{R_\loc}\ar[r]^{\sim} & H^q(\Omega_\varepsilon,\KE[\Omega_\varepsilon])\ar[d]\ar[r]^{\sim}& \check H^q(\aU_\varepsilon,\KE[\Omega_\varepsilon])\ar[d]\\
H^{n,q}_{w,\loc}(\Omega,E) \ar[r]^{\sim} & H^q(\Omega,\KE[\Omega])\ar[r]^{\sim}& \check H^q(\aU,\KE[\Omega])\DiagrammPunkt}\]
Hence, the injectivity of $R_\loc$ is proven if we can show
that each cocycle $\eta\in Z^q(\aU_\varepsilon,\KE[\Omega_\varepsilon])$
with $\eta|_{\aU}:=(\eta_{i_0...i_q})_{i_k\in I_1\cup I_2}=\delta\xi'$ for a  $\xi'\in C^{q-1}(\aU,\KE[\Omega])$
is also a coboundary on $\aU_\varepsilon$,
\ie $\eta=\delta\xi$ for a $\xi\in C^{q-1}(\aU_\varepsilon,\KE[\Omega_\varepsilon])$.
\smallskip \par
We define $\xi^{(1)}=(\xi^{(1)}_{i_1...i_q})\in C^{q-1}(\aU_\varepsilon,\KE[\Omega_\varepsilon])$ as the trivial extension of $\xi'$, \ie
\[\xi^{(1)}_{i_1...i_q}:=
\bigg\{\begin{array}{cl} 0& \hbox{ if  } \exists k \hbox{ \st } i_k\in I_3,\\
\xi'_{i_1...i_q} & \hbox{ otherwise,} \end{array}
\]
$\eta^{(1)}:=\delta\xi^{(1)}\in B^q(\aU_\varepsilon,\KE[\Omega_\varepsilon])$ and $\eta^{(2)}:=\eta-\eta^{(1)}\in Z^q(\aU_\varepsilon,\KE[\Omega_\varepsilon])$.
By construction, %
$\eta^{(2)}_{i_0...i_q}=0$ if all $i_0{,}...,i_q\in I_1\cup I_2$. %
Since $U_{ij}=\emptyset$ for $i\in I_1$ and $j\in I_3$ by \GetItem{disjointX}, we get  %
\begin{equation}\label{eq:easy-eta}
\eta^{(2)}_{i_0...i_q}=0\hbox{ or }U_{i_0...i_q}=\emptyset
\quad\hbox{if  } \exists k \hbox{ \st } i_k\in I_1.
\end{equation}
\smallskip \par
Let $\aV:=\{V_i\}_{i\in \tilde I}$ be an open Stein cover of $Z$
with $\tilde I=\tilde I_1 {\dot \cup}  I_2 {\dot \cup}  I_3$ such that
\AddItem[unchangedY]
$V_i=\pi(U_i)$ for all $i\in I_2\cup I_3$,
\AddItem[disjointZ]
$V_{ij}=\emptyset$ for $i\in \tilde I_1$ and $j\in I_3$ and
\AddItem[coarse]
for every $i\in I_1$, there is a $j\in \tilde I_1$ such that $\pi(U_i)\subset V_j$.
If necessary, we can make $\aU_\varepsilon$ finer to obtain \GetItem{coarse}.
By \GetItem{unchangedX} and since $\pi$ is one-to-one outside of $A$,
we get for every $i\in I_2\cup I_3$, $V_i$ is indeed open, Stein and disjoint from $Z_0=\pi(A)$.
$\tilde{\aU}:=\{\tilde U_i\}_{i\in \tilde I}$ defined by $\tilde U_i:=\pi^{-1}(V_i)$ %
is an open cover of $\Omega_\varepsilon$ (not Stein necessarily).
Actually, $\tilde U_i=U_i$ for all $i\in I_2\cup I_3$. %
Because of this and \GetItem{coarse}, $\aU_\varepsilon$ is finer than $\tilde{\aU}$.
Let $\tau\colon I\rightarrow\tilde I$ denote the refinement map.
Using \eqref{eq:easy-eta}, we can define well
\[
\tilde\eta_{i_0...i_q}
:=\bigg\{\begin{array}{cl} 0& \hbox{ if  } \exists k \hbox{ \st } i_k\in\tilde I_1,\\
\eta^{(2)}_{i_0...i_q} & \hbox{ otherwise.} \end{array}\]
Straight forward calculations confirm $\tilde\eta$ is a cocycle, \ie $\tilde\eta\in Z^q(\tilde{\aU},\KE[\Omega_\varepsilon])$.
Actually, $\eta^{(2)}$ is the image of  $\tilde\eta$ under the map $C^q(\tau)$ induced by the refinement $\tau$ on cochains/cocycles.
Since $\tilde\eta_{i_0...i_q}=0$ whenever $U_{i_0...i_q}\cap A\neq\emptyset$, we get
$\vartheta_{i_0...i_q}=\tilde\eta_{i_0...i_q}\circ\pi^{-1}$ defines a cocycle $\vartheta\in Z^q(\aV,\pi_\ast\KE[\Omega_\varepsilon])$ on $Z$.
\par
Since $Z$ is Stein and $\pi_\ast\sK_{E,\Omega_\varepsilon}$ is a coherent analytic sheaf,
Theorem B of Cartan--Serre gives us $\check H^q(\aV,\pi_\ast\KE[\Omega_\varepsilon])=0$, %
\ie there exists a $\zeta\in C^{q-1}(\aV,\pi_\ast\sK_{E,\Omega_\varepsilon})$
with $\vartheta=\delta\zeta$. %
We set $\tilde\xi:=\pi^\ast \zeta \in C^{q-1}(\tilde{\aU},\KE[\Omega_\varepsilon])$ and get $\tilde\eta=\delta\tilde\xi$.
As $\aU_\varepsilon$ is finer than $\tilde{\aU}$, the refinement map $\tau$ induces
a cochain $\xi^{(2)}=C^{q-1}(\tau)(\tilde\xi)\in C^{q-1}(\aU_\varepsilon,\KE[\Omega_\varepsilon])$ such that $\eta^{(2)}=\delta\xi^{(2)}$.
Hence, %
\[\eta=\eta^{(1)}+\eta^{(2)}=\delta\left(\xi^{(1)}+\xi^{(2)}\right).\]
\FormelQed
\end{proof}
\smallskip \par
\begin{proof}[Proof of \autoref{main-grauert}]
We consider the following commutative diagram:
\[\xymatrix{
& H^{n,q}_{w,\loc}(\Omega_\varepsilon,E)\ar[d]^{R_\loc}\ar[dl]_{R} \\
H^{n,q}_{w}(\Omega,E) \ar[r]_{\iota} &H^{n,q}_{w,\loc}(\Omega,E)\DiagrammPunkt}\]
Since $R_\loc$ is surjective by \autoref{surjectivity} (ii),
we get $\iota$ is surjective.
By \autoref{surjectivity} (i), we get also $R$ is surjective,
\ip the kernel of $\iota$ is equal to $R$'s image of the kernel of $\iota \circ R=R_\loc$.
Since \autoref{R-loc-injectivity} implies the latter is 0, %
$\iota$ is injective, as well.
\end{proof}
\bibliographystyle{amsalpha}
\providecommand{\bysame}{\leavevmode\hbox to3em{\hrulefill}\thinspace}
\providecommand{\MR}{\relax\ifhmode\unskip\space\fi MR }
\providecommand{\MRhref}[2]{%
\href{http://www.ams.org/mathscinet-getitem?mr=#1}{#2}
}
\providecommand{\href}[2]{#2}

\end{document}